\newtheorem{assum}{Assumption}
\newtheorem{theorem}{Theorem}
\newtheorem{lemma}{Lemma}
\newtheorem{remark}{Remark}
\newcommand\copyrighttext{%
  \centering \footnotesize Preprint submitted to the 60th IEEE Conference on Decision and Control
(CDC) 2021.}
\newcommand\copyrightnotice{%
\begin{tikzpicture}[remember picture,overlay]
\node[anchor=south,yshift=10pt] at (current page.south) {\fbox{\parbox{\dimexpr\textwidth-\fboxsep-\fboxrule\relax}{\copyrighttext}}};
\end{tikzpicture}%
}
\title{\LARGE \bf
Robustness of a feedback optimization scheme with application to bioprocess manufacturing
}
\author{ \parbox{1.3 in}{\centering Mirko Pasquini $^{a,b,*}$,\\
        \thanks{\noindent{$^a$} Division of Decision and Control, KTH Royal Institute of Technology, Sweden\\
        $^b$ Centre for Advanced Bioproduction, KTH Royal Institute of Technology, Sweden\\ 
        $^c$ Department of Industrial Biotechnology, KTH Royal Institute of Technology, Sweden\\
        $^*$ Corresponding author\\
        E-mail:
        {\tt\small \{pasqu, hjalmars, kcolin\} @kth.se\\ veronique.chotteau@biotech.kth.se}}}
        \hspace*{ 0.15 in}

        \parbox{1.2 in}{ \centering K\'evin Colin $^{a,b}$,
         \thanks{}}
        \parbox{1.8 in}{ \centering V\'eronique Chotteau $^{b,c}$  
        \thanks{This work has been supported by the VINNOVA Competence Centre AdBIOPRO, contract 2016-05181, and by the Swedish Research Council through the research environment NewLEADS - New Directions in Learning Dynamical Systems, contract 2016-06079.}}
        \parbox{1.8 in}{ \centering H{\aa}kan Hjalmarsson $^{a,b}$,
        }
        \hspace*{ 0.2 in}
        }
\begin{document}

\maketitle
\thispagestyle{empty}
\pagestyle{empty}

\copyrightnotice{}

%%%%%%%%%%%%%%%%%%%%%%%%%%%%%%%%%%%%%%%%%%%%%%%%%%%%%%%%%%%%%%%%%%%%%%%%%%%%%%%%
\begin{abstract}

In this work the robustness of a feedback optimization scheme is discussed. Previously known results in literature, on the convergence to local optima of the optimization problem of interest, are extended to the case where the sensitivities of the steady-state input-output map of the plant present bounded uncertainties. The application of the scheme to a biological setting, with the goal of maximizing the concentration of products of interest in a bioreactor, under a continuous perfusion framework, is suggested and the potential of the approach is exposed by means of a simple synthetic example. 

\end{abstract}

%%%%%%%%%%%%%%%%%%%%%%%%%%%%%%%%%%%%%%%%%%%%%%%%%%%%%%%%%%%%%%%%%%%%%%%%%%%%%%%%
\section{Introduction}
\textcolor{black}{
% \begin{itemize}
%     \item Feedback optimization brief intro and literature;
%     \item Work of Haberle better intro and limitations due to only perfect knowledge of Jacobian;
%     \item Describe attempt of Colombini and why our approach is different;
%     \item Our approach;
%     \item Application to bioprocessing and why we think this might be a good choice;
%     \item Structure of the paper.
% \end{itemize}
Feedback optimization is a control technique whose goal is to drive the steady-state output of a given plant, to local optima of an objective function, by taking into account output measurements of the physical system (see  \cite{Jokic2009,Bernstein2019} and references therein). This technique is gaining increasing interest in the control community due to its potential application in many practical fields, in particular recently focusing to power systems optimization \cite{Hauswirth2017, DallAnese2016, Ortmann2020}.
In \cite{Haberle2020} the authors described a feedback optimization framework in which the input is discretely updated, based on a projected gradient descent scheme. Constraints in the input variables are considered and output constraints are satisfied asymptotically, with a violation which can be quantified step-by-step. The ease of tuning and the capacity of handling non-convex problems, makes this approach extremely appealing. \\ It is well known, and remarked in the aforementioned paper, that a feedback optimization scheme would be inherently more robust than an open loop one, and it is conjectured that the approach would work as well in the presence of uncertainties in the input-output map sensitivities. Indeed robustness of the method has been experimentally validated in \cite{Ortmann2020}, while in \cite{Colombino2019} the authors worked in the directions of theoretical robustness guarantees for such a scheme. Here the authors, by considering a constant approximation of the input-output map sensitivities, gave LMI conditions for convergence to an online approximate solution, namely the solution that would be obtained if the approximate was true. However, differently from the case of \cite{Haberle2020}, the authors did not consider constraints on the output, simply enforcing soft constraints, by adding penalty terms in the objective function. \\ \\
Our goal here is twofold. First of all we would like to formalize a robust convergence result for the framework presented in \cite{Haberle2020}, introducing uncertainties in the knowledge of the Jacobian of the steady-state input-output map, but still considering output constraints, differently from what has been in done in \cite{Colombino2019}. Our main result (Theorem \ref{thm:convergence_uncertain setting}) will guarantee, under appropriate conditions, the convergence to a local optimum of the constrained optimization, if the step of the control algorithm is chosen small enough, contrary to \cite{Colombino2019} where the convergence is guaranteed only to an approximate solution whose distance from the real one depends on the goodness of the approximation.\\ \\
The second goal we have is, through a synthetic example, to propose the possibility of using such a feedback optimization scheme in a bioprocessing framework. A crucial aspect of bioprocessing production is the optimization of feed-flow medium composition, in order to maximize products of interest yield \cite{Chotteau2015, Singh2017}. For processes where mammalian cells are used for the production of the molecule of interest, such systems are highly complex with large uncertainties in the used models, making model based optimization prone to provide sub-optimal solutions. For this reason feedback optimization, where process measurements are used directly, seems to be an interesting alternative. However, as a model is still needed to be able to compute the Jacobian, the method needs to be robust to counter the modeling errors. \\
The problem we will consider is to maximize the concentration in the bioreactor at steady state, for a product of interest in a metabolic network, by optimizing the feed-flow medium composition in a continuous perfusion context i.e. the case in which medium is continuously added to the bioreactor and its content is continuously collected.\\ \\
The paper is structured as follows: in Section \ref{sec:feedback_optimization} the feedback optimization framework from \cite{Haberle2020} will be briefly recalled, together with their main result on the convergence of the control algorithm to local optima in the nominal case, while in Section \ref{sec:robustness_result} the main result of the paper, an extension of the convergence result in the presence of bounded uncertainties in the input-output map Jacobian, will be stated and its proof postponed in the Appendix. In Section \ref{sec:example} the problem of maximising the concentration of a product of interest, for a synthetic metabolic network in a continuous perfusion setting, is faced in order to propose feedback optimization as a valuable option in feed-flow medium optimization. Finally Section \ref{sec:conclusion} concludes the paper and presents some possible future extensions.  }
\subsection*{Preliminaries and notation}
We consider any vector $v \in \mathbb{R}^n$ to be a column, except for the gradient $\nabla f$ of a function $f: \Omega \subseteq \mathbb{R}^n \to \mathbb{R}$, which we consider to be a row vector. With $\nabla f$ we denote the gradient of the function $f$ if this is single-valued, while it refers to its Jacobian if it is vector-valued. For a vector $v$, with $||v||$ we indicate its norm and for a matrix $M$, with $||M||$ we indicate the matrix norm induced by the chosen vector norm, i.e.
\begin{equation}
    \label{eq:vector_induced_norm_matrix_norm}
    ||M|| := \sup \{{|| M v || \over ||v||},\forall v \ne 0\}
\end{equation}
With $\mathcal{S}_+^p$ we denote the set of positive semidefinite  symmetric matrix of dimension $p$, while $\mathbb{I}^p$ denotes the identity matrix of dimension $p$. Any matrix $G \in \mathcal{S}_+^p$ induces a vector 2-norm, formally:
\begin{equation*}
    || v ||_{G} = \sqrt{v^\top G v}
\end{equation*}
Ultimately, we say that a function $f:\Omega \subseteq \mathbb{R}^n \to \mathbb{R}^m$ is globally $L$-Lipschitz or simply $L$-Lipschitz on $\Omega$ if, for all $x$, $y \in \Omega$ it holds:
\begin{equation*}
    ||f(y) - f(x)|| \le L \cdot || y - x ||
\end{equation*}
%%%%%%%%%%%%%%%%%%%%%%%%%%%%%%%%%%%%%%%%%%%%%%%%%%%%%%%%%%%%%%%%%%%%%%%%%%%%%%%%
\section{Feedback optimization}
\label{sec:feedback_optimization}
We recall here briefly the feedback optimization framework presented in \cite{Haberle2020}, to which the reader is referred for a more thorough description.\\
Feedback optimization is a control scheme that aims to steer a system's steady-state output to be the solution of a given optimization problem.\\ The plant's steady-state input-output map $h(u)$ is assumed to be unknown, but the plant's output can be measured. In \cite{Haberle2020}, the Jacobian $\nabla h$ of this map is assumed to be known exactly, an assumption that we will relax in Section \ref{sec:robustness_result}, by allowing the presence of bounded uncertainties.\\ \\
The considered problem is the following:
\begin{align}
    \label{eq:general_optimization_problem_feedback_opt}
    \min\limits_{u,\ y}& \  \Phi(u,y)\\
    \text{s.t.} \ \ &y = h(u) \nonumber\\
         &u \in \mathcal{U}, y \in \mathcal{Y} \nonumber
\end{align}
where $\Phi: \mathbb{R}^p \times \mathbb{R}^m \to \mathbb{R}$ is a continuosly differentiable function, $\mathcal{U} \subseteq \mathbb{R}^p$ and $\mathcal{Y} \subseteq \mathbb{R}^m$.
The following assumptions are made.
\begin{assum}
\label{assum:polyhedral_feasible_region}
    The feasible regions $\mathcal{U} \subseteq \mathbb{R}^p$ and $\mathcal{Y} \subseteq \mathbb{R}^m$ are described by:
    \begin{align*}
        \mathcal{U} = \{u \in \mathbb{R}^p \ | \ A u \le b, A \in \mathbb{R}^{q \times p}, b \in \mathbb{R}^q\}\\
        \mathcal{Y} = \{y \in \mathbb{R}^m\ |\ C y \le d, C \in \mathbb{R}^{l \times m}, d \in \mathbb{R}^l\}
    \end{align*}
    $\hfill \blacktriangleleft$
\end{assum}
\begin{assum}
\label{assum:feasibility_input_set_is_non_empty}
The following set:

\begin{equation*}
    \Tilde{\mathcal{U}} = \{u \in \mathbb{R}^p\ |\ A u \le b, Ch(u) \le d\} = \mathcal{U}\cap  h^{-1}(\mathcal{Y}) 
\end{equation*}
is non-empty.
    $\hfill \blacktriangleleft$
\end{assum}
The static input $u$ will evolve in a discrete fashion, according to the following rule:
\begin{align}
\label{eq:input_evolution_nominal}
    u^+ = u + \alpha  \hat{\sigma}_\alpha(u,y)\\ y = h(u) \nonumber
\end{align}
with:
\begin{align}
\label{eq:sigma_alpha_problem_nominal}
    \hat{\sigma}_\alpha(u,y) := &\arg\min\limits_{w \in \mathbb{R}^p} || w + G^{-1}(u) H(u)^\top \nabla \Phi(u,y)^\top||^2_{G(u)}\\
    s.t.  \quad & A(u + \alpha w) \le b  \nonumber\\
          \quad & C(y  +\alpha \nabla h(u) w) \le d \nonumber
\end{align}
where $G(u) : \mathbb{R}^p \to \mathcal{S}_+^p$ is a metric defined on $\mathbb{R}^p$, i.e. a map that to any point $u \in \mathbb{R}^p$ associates a  semidefinite positive matrix, $H(u)^T = \begin{bmatrix} \mathbb{I}^p & \nabla h(u)^\top \end{bmatrix}$ and $A$, $b$, $C$ and $d$ are specified in Assumption \ref{assum:polyhedral_feasible_region}. Given the discrete nature of the algorithm and the fact that $h$ is the plant's steady-state input-output map, it is implicitly assumed that the system dynamics will reach a steady-state before the next input $u^+$ will be applied.\\ \\
It is noted that, because the map $h$ is unknown, exact constraints on the output $y$ cannot be enforced, hence  violations of output constraints are admitted, but these are bounded and can be quantified --as showed in the proof of \cite[Lemma 4]{Haberle2020} -- and it is assured that the solution will converge to a feasible one.\\ The following assumptions are made to ensure the convergence of the algorithm to an optimality point of \eqref{eq:general_optimization_problem_feedback_opt}. 

\begin{assum}
\label{assum:LICQ_qualification}
For all $u \in \mathcal{U}$ the feasible set:
\begin{equation*}
    \Tilde{\mathcal{U}}_u := \{w | A(u + \alpha w) \le b, \ C(y + \alpha \nabla h(u) w) \le d\}
\end{equation*}
is non-empty and satisfies Linear Independence Constraint Qualification (LICQ) (see \cite{Haberle2020} for the formal definition), for all $w \in \Tilde{\mathcal{U}}_u$.     $\hfill \blacktriangleleft$
\end{assum}

\begin{assum}
    \label{assum:U_is_compact}
    For \eqref{eq:general_optimization_problem_feedback_opt}, $\mathcal{U}$ is compact.     $\hfill \blacktriangleleft$
\end{assum}
With the above assumptions, the following Theorem holds:
\begin{theorem}
\label{thm:convergence_nominal}
Let Assumptions 2 -- 4 hold. Let $\nabla \Phi$ and $\nabla h$ be globally Lipschitz.
Then $\exists \alpha^* > 0$ s.t. for all $0 < \alpha < \alpha^*$ s.t. given any trajectory of $u$, for any $u^0 \in  \mathcal{U}$, obtained through the updating rule \eqref{eq:input_evolution_nominal} -- \eqref{eq:sigma_alpha_problem_nominal}, then $y  = h(u)$ converges to the set of first-order optimality points of \eqref{eq:general_optimization_problem_feedback_opt}
\end{theorem}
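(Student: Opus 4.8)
The plan is to treat the reduced objective $\phi(u) := \Phi(u, h(u))$ as a merit (Lyapunov) function and to show that the update \eqref{eq:input_evolution_nominal} produces a monotone, summable decrease of $\phi$ whose size controls a stationarity measure built from $\hat{\sigma}_\alpha$. First I would observe, via the chain rule, that $\nabla \phi(u)^\top = H(u)^\top \nabla \Phi(u,h(u))^\top$ with $H(u)^\top = \begin{bmatrix} \mathbb{I}^p & \nabla h(u)^\top\end{bmatrix}$, so the column vector $H(u)^\top\nabla\Phi^\top$ appearing inside \eqref{eq:sigma_alpha_problem_nominal} is exactly $\nabla\phi(u)^\top$. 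Because $\nabla \Phi$ and $\nabla h$ are globally Lipschitz and $\mathcal{U}$ is compact (Assumption~4), $\nabla h$ is bounded on $\mathcal{U}$ and $\nabla \phi$ is $L$-Lipschitz on $\mathcal{U}$ for a finite $L$; continuity of $G$ on the compact $\mathcal{U}$ likewise yields uniform bounds $\underline{g}\,\mathbb{I}^p \preceq G(u) \preceq \bar{g}\,\mathbb{I}^p$. These uniform constants are what allow a single $\alpha^*$ to work along the entire trajectory.

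Second, I would read \eqref{eq:sigma_alpha_problem_nominal} as the projection, in the $G(u)$-metric, of the descent direction $-G(u)^{-1}\nabla\phi(u)^\top$ onto the convex polyhedral set of admissible increments. Since $w=0$ is feasible whenever $u$ is feasible, the variational inequality characterizing the projection, tested at $w=0$, gives the key descent inequality $\nabla\phi(u)\,\hat{\sigma}_\alpha(u,y) \le -||\hat{\sigma}_\alpha(u,y)||_{G(u)}^2$. Combining this with the descent lemma for the $L$-smooth function $\phi$ evaluated at $u^+ = u + \alpha\hat{\sigma}_\alpha$, and using $||\cdot||^2 \le \underline{g}^{-1}||\cdot||_{G(u)}^2$, yields
\begin{equation*}
\phi(u^+) \le \phi(u) - \alpha\Big(1 - \tfrac{L\alpha}{2\underline{g}}\Big)||\hat{\sigma}_\alpha(u,y)||_{G(u)}^2 .
\end{equation*}
Choosing $\alpha^* = 2\underline{g}/L$ makes the bracketed coefficient strictly positive for all $0<\alpha<\alpha^*$, so each step strictly decreases $\phi$ by an amount proportional to $||\hat{\sigma}_\alpha||_{G(u)}^2$.

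Third, since $\phi$ is continuous on the compact set $\mathcal{U}$ it is bounded below, so telescoping the decrease inequality over the trajectory shows $\sum_k ||\hat{\sigma}_\alpha(u^k,y^k)||_{G(u^k)}^2 < \infty$, whence $\hat{\sigma}_\alpha(u^k,y^k)\to 0$. Finally I would characterize the zeros of the map $\hat{\sigma}_\alpha$: writing the KKT conditions of \eqref{eq:sigma_alpha_problem_nominal} and invoking the LICQ hypothesis (Assumption~3) to guarantee existence and the correct structure of the multipliers, one shows that $\hat{\sigma}_\alpha(u,y)=0$ is equivalent to $(u,h(u))$ satisfying the first-order (KKT) conditions of \eqref{eq:general_optimization_problem_feedback_opt}. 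Passing to the limit along the bounded trajectory and using continuity of $\hat{\sigma}_\alpha$ then gives that every accumulation point of $y=h(u)$ lies in the set of first-order optimality points.

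I expect the main obstacle to be this last step rather than the descent estimate. The feasible set of \eqref{eq:sigma_alpha_problem_nominal} — in particular the linearized output constraint $C(y+\alpha\nabla h(u)w)\le d$ — depends explicitly on $\alpha$, so $\hat{\sigma}_\alpha$ is an $\alpha$-parametrized projection and the equivalence between $\hat{\sigma}_\alpha=0$ and the KKT conditions must be established with care; LICQ is precisely what keeps the active-constraint multipliers well defined and permits taking limits of the KKT system. A secondary technical point is verifying that $w=0$ remains admissible throughout, given that the output constraints are only satisfied up to the quantifiable violation noted after \eqref{eq:sigma_alpha_problem_nominal}; this is where the bounded-violation argument of \cite[Lemma 4]{Haberle2020} would be used.
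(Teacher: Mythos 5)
There is a genuine gap, and it sits exactly where you placed your ``secondary technical point'': the feasibility of $w=0$ in the subproblem \eqref{eq:sigma_alpha_problem_nominal}. Your entire descent estimate rests on testing the variational inequality of the $G(u)$-projection at $w=0$, but $w=0$ satisfies the linearized output constraint $C(y+\alpha\nabla h(u)w)\le d$ only if $Ch(u)\le d$ at the current iterate. The scheme explicitly does \emph{not} maintain this: only the linearization is enforced at each step, the true output constraint can be violated along the trajectory (the paper notes these violations are merely bounded and quantifiable), and the initial point $u^0\in\mathcal{U}$ need not satisfy $Ch(u^0)\le d$ at all (Assumption \ref{assum:feasibility_input_set_is_non_empty} only requires $\tilde{\mathcal{U}}\neq\emptyset$). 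At an infeasible iterate the constraint $C_i\bigl(h(u)+\alpha\nabla h(u)w\bigr)\le d_i$ with $C_ih(u)>d_i$ forces $w\neq 0$, the inequality $\nabla\phi(u)\,\hat{\sigma}_\alpha\le-\|\hat{\sigma}_\alpha\|^2_{G(u)}$ fails, and your merit function $\phi(u)=\Phi(u,h(u))$ can increase. The bounded-violation lemma you invoke does not repair this: it bounds the \emph{new} violation at $u^+$ by $O(\alpha^2)$ terms; it does not make $w=0$ admissible, so it cannot salvage the projection argument. A second casualty of the same gap is asymptotic feasibility: since $\phi$ ignores the output constraints entirely, even if you obtained $\hat{\sigma}_\alpha\to 0$ you could not conclude that accumulation points satisfy $Ch(u)\le d$, which is part of being a first-order optimality point of \eqref{eq:general_optimization_problem_feedback_opt}.

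The proof the paper relies on (from H\"aberle et al., mirrored in the Appendix for the robust case) resolves this with an \emph{exact penalty} Lyapunov function, $V(u)=\tilde{\Phi}(u)+\xi\sum_{i=1}^{l}\max\{0,\,C_i h(u)-d_i\}$ as in \eqref{eq:Lyapunov_function_definition}, with $\xi$ dominating all multipliers $\mu_i^*(u)$ of the linearized output constraints as in \eqref{eq:xi_sup_property}. The descent bound on $\nabla\tilde{\Phi}(u)\,w$ is then extracted not from a projection inequality at $w=0$ but from the KKT stationarity and complementary slackness conditions of the subproblem, which produce the residual terms $\sum_i\mu_i\max\{0,C_ih(u)-d_i\}$; these are absorbed by the penalty term precisely because $\xi\ge\mu_i$. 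The descent lemma applied to $C_i\nabla h$ (Lemma \ref{lem:output_violation_bound} here, Lemma 4 in the cited work) controls the growth of the penalty at $u^+$, and LaSalle's invariance principle closes the argument, simultaneously yielding $w=0$ and $C_ih(u)\le d_i$ in the limit set. Your Lipschitz/compactness bookkeeping, the $L$-smoothness of the reduced objective, and the characterization of zeros of $\hat{\sigma}_\alpha$ as KKT points are all sound and indeed appear in the real proof; what is missing is the penalty mechanism, without which the monotone-decrease step is false at output-infeasible iterates.
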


\section{Main contribution - Robustness result}
\label{sec:robustness_result}
In this section Theorem \ref{thm:convergence_nominal} is extended to the case where the Jacobian of $h(u)$ is uncertain.\\ \\
Let the Jacobian of the  map $h(u)$, be known up to an additive uncertainty, and be denoted with $\widehat{\nabla h(u)}$ i.e.:
\begin{equation}
    \label{eq:known_Jacobian}
    \widehat{\nabla h(u)} := \nabla h(u) + \delta(u)
\end{equation}
In the following we will assume that the matrix norm of $\delta(u)$ is bounded. In this case the update rule will remain as in Eq. \eqref{eq:input_evolution_nominal}, while the optimization problem solved to find $\hat{\sigma}_{\alpha}$ will now become:
\begin{align}
\label{eq:sigma_alpha_problem_uncertain_formulation}
    \hat{\sigma_\alpha}(u,y) := &\arg\min\limits_{w \in \mathbb{R}^p} || w + G^{-1}(u)\hat{H}(u)^\top \nabla\Phi(u,y)^\top||^2_{G(u)}\\
    s.t.  \quad & A(u + \alpha w) \le b  \nonumber\\
          \quad & C(y  +\alpha \nabla h(u) w + \alpha \delta(u) w) \le d \nonumber
\end{align}
where, in this case, 
\begin{equation*}
    \hat{H}(u)^\top = \begin{bmatrix} \mathbb{I}^p &  \widehat{\nabla h(u)}^\top \end{bmatrix}
\end{equation*}

% \begin{enumerate}
%     \item By using the invariance principle, try to prove that $V(u^+) = V(u)$ means $u^+ = u$ and that in this case KKT conditions are satisfied (i.e. convergence to optimal points)
%     \item Try to prove also last point.
% \end{enumerate}
Before proceeding, Assumption \ref{assum:LICQ_qualification} is substituted with the following.
\begin{assum}
\label{assum:LICQ_qualification_mod}
For all $u \in \mathcal{U}$ the feasible set:
\begin{equation*}
    \Tilde{\mathcal{U}}_u := \{w | A(u + \alpha w) \le b, \ C(y + \alpha \nabla h(u) w + \alpha \delta(u) w) \le d\}
\end{equation*}
is non-empty and satisfies LICQ for all $w \in \Tilde{\mathcal{U}}_u$.     $\hfill \blacktriangleleft$
\end{assum}
Part of the proof for the Theorem below --which is moved in the Appendix for ease of reading -- is based on steps similar to the ones in the proof of what has been reported here as Theorem \ref{thm:convergence_nominal}, presented in \cite{Haberle2020}. In particular the first half of the proof, up to Lemma \ref{lem:Lyapunov_difference}, follows tightly the steps of \cite{Haberle2020} -- except for the presence of the uncertainty $\delta(u)$, and the consequent adaptations -- while the proof of Lemma \ref{lem:Lyapunov_difference}, which is central to the main result, is approached in an almost completely different way, due to the presence of mixed terms dependent on the uncertainty $\delta(u)$. All the main steps of the proof are reported, for clarity of explanation, although some intermediate steps or some passages identical to the nominal case are here skipped.\\
We remark that the spirit of the result is the same as Theorem \ref{thm:convergence_nominal}, i.e. that by choosing an $\alpha$ small enough, the feedback optimization control algorithm will drive the output of the system to solve the optimization problem \eqref{eq:general_optimization_problem_feedback_opt}, despite the presence of an uncertainty in the knowledge of the steady-state input-output map sensitivities.
\begin{theorem}
\label{thm:convergence_uncertain setting}
Let Assumptions \ref{assum:feasibility_input_set_is_non_empty}, \ref{assum:U_is_compact} and \ref{assum:LICQ_qualification_mod} hold. Let $\nabla \Phi$ and $\nabla h$ be globally Lipschitz. Let $\widehat{\nabla h(u)}$ be defined as \eqref{eq:known_Jacobian} and let $\delta(u)$ be continuous and its norm bounded i.e.:
\begin{equation}
\label{eq:bounded_delta_M_delta_condition}
    ||\delta(u)|| \le M_\delta
\end{equation}
Assume further that:
\begin{equation}
\label{eq:bounded_nabla_y_Phi_M_nabla_condition}
    ||\nabla_y \Phi(u,y)|_{y = h(u)}|| \le M_\nabla
\end{equation}
where $\nabla_y \Phi(u,y)|_{y = h(u)}$ is the sub-vector of the gradient of $\Phi(u,y)$ relative to $y$, evaluated in $y = h(u)$.\\
Then $\exists \alpha^* > 0$ s.t. for all $0 < \alpha < \alpha^*$, any trajectory of $u$, obtained through the updating rules \eqref{eq:input_evolution_nominal}  and \eqref{eq:sigma_alpha_problem_uncertain_formulation}, for any $u^0 \in  \mathcal{U}$, is such that $y  = h(u)$ converges to the set of first-order optimality points of \eqref{eq:general_optimization_problem_feedback_opt} $\hfill \blacktriangleleft$\\
\end{theorem}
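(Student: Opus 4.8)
The plan is to adapt the Lyapunov / sufficient-decrease argument of \cite{Haberle2020} that underlies Theorem \ref{thm:convergence_nominal}, using as candidate the reduced cost $V(u):=\Phi(u,h(u))$ evaluated along the trajectory generated by \eqref{eq:input_evolution_nominal} and \eqref{eq:sigma_alpha_problem_uncertain_formulation}. First I would establish well-posedness of the search direction: by Assumption \ref{assum:LICQ_qualification_mod} the set $\Tilde{\mathcal{U}}_u$ is non-empty and satisfies LICQ, so the strictly convex program \eqref{eq:sigma_alpha_problem_uncertain_formulation} has a unique minimizer $\hat{\sigma}_\alpha(u,y)$ characterized by its KKT conditions. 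Compactness of $\mathcal{U}$ (Assumption \ref{assum:U_is_compact}) together with global Lipschitz continuity of $\nabla\Phi$ and $\nabla h$ yields uniform bounds on $\nabla h(u)$, on $G(u)$ and $G(u)^{-1}$, and hence a single Lipschitz constant $L_V$ for $\nabla V$ on $\mathcal{U}$; the asymptotic, quantifiable output-constraint violation is tracked exactly as in \cite{Haberle2020}. This reproduces, up to the bookkeeping of $\delta(u)$, the first half of \cite{Haberle2020} and delivers the per-step expansion
\[
V(u^+)-V(u)\le \alpha\,\nabla V(u)\,\hat{\sigma}_\alpha+\tfrac{L_V}{2}\alpha^{2}\|\hat{\sigma}_\alpha\|^{2}.
\]

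The core is Lemma \ref{lem:Lyapunov_difference}, where $\delta$ first enters non-trivially. The minimizer of \eqref{eq:sigma_alpha_problem_uncertain_formulation} is the $G(u)$-orthogonal projection of $-G(u)^{-1}\hat{H}(u)^\top\nabla\Phi^\top$ onto $\Tilde{\mathcal{U}}_u$; writing its defining variational inequality and testing it at the feasible point $w=0$ (available whenever the current iterate is feasible) gives $\big(\hat{H}(u)^\top\nabla\Phi^\top\big)^\top\hat{\sigma}_\alpha\le -\|\hat{\sigma}_\alpha\|_{G(u)}^{2}$. Since $\hat{H}(u)^\top\nabla\Phi^\top=\nabla V(u)^\top+\delta(u)^\top\nabla_y\Phi(u,y)^\top$ by \eqref{eq:known_Jacobian}, substitution isolates the \emph{true} reduced gradient together with a mixed term,
\[
\nabla V(u)\,\hat{\sigma}_\alpha\le -\|\hat{\sigma}_\alpha\|_{G(u)}^{2}-\nabla_y\Phi(u,y)\,\delta(u)\,\hat{\sigma}_\alpha,
\]
where the cross term is bounded in magnitude by $M_\nabla M_\delta\|\hat{\sigma}_\alpha\|$ through \eqref{eq:bounded_delta_M_delta_condition} and \eqref{eq:bounded_nabla_y_Phi_M_nabla_condition}. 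Combining the two displays gives a one-step inequality $V(u^+)-V(u)\le -\alpha\|\hat{\sigma}_\alpha\|_{G(u)}^{2}+\alpha M_\nabla M_\delta\|\hat{\sigma}_\alpha\|+\tfrac{L_V}{2}\alpha^{2}\|\hat{\sigma}_\alpha\|^{2}$; I would then fix $\alpha^*$ so the negative quadratic term dominates the $O(\alpha^2)$ remainder, telescope the decrease along the trajectory, and use that $V$ is bounded below on the compact $\mathcal{U}$ to force $\hat{\sigma}_\alpha\to 0$. A final step identifies the zeros of $\hat{\sigma}_\alpha$, via the KKT conditions of \eqref{eq:sigma_alpha_problem_uncertain_formulation}, with the first-order optimality points of \eqref{eq:general_optimization_problem_feedback_opt}, whence $y=h(u)$ converges to that set.

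I expect the mixed term $\nabla_y\Phi\,\delta\,\hat{\sigma}_\alpha$ to be the main obstacle, which is exactly why Lemma \ref{lem:Lyapunov_difference} must be argued differently from the nominal case. The difficulty is that this term is only \emph{linear} in $\|\hat{\sigma}_\alpha\|$, whereas the guaranteed descent is \emph{quadratic}: a crude bound yields strict decrease only while $\|\hat{\sigma}_\alpha\|\gtrsim M_\nabla M_\delta$, i.e. convergence to a neighbourhood whose size scales with the uncertainty — precisely the approximate guarantee of \cite{Colombino2019} that the theorem claims to surpass. Closing this gap requires re-organising the estimate rather than bounding the cross term in isolation: one would exploit the projection's variational inequality more tightly and absorb $\nabla_y\Phi\,\delta\,\hat{\sigma}_\alpha$ into the quadratic descent and the $O(\alpha^2)$ terms (for instance through a completion of squares in the $G(u)$-metric and a sufficiently small $\alpha^*$), so that net decrease, and hence exact convergence to the first-order points, persists. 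This is the step I would scrutinise most carefully, as it is where the claimed improvement over \cite{Colombino2019} is won or lost.
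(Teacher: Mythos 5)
Your proposal correctly reproduces the skeleton of the paper's argument up to the descent expansion, but it has two genuine gaps, and the second one you concede yourself. First, your Lyapunov candidate $V(u)=\Phi(u,h(u))$ together with the variational-inequality test at $w=0$ does not work in this framework: along the trajectory the output constraint is satisfied only asymptotically, so at a generic iterate $Ch(u)\le d$ fails and $w=0$ is \emph{infeasible} for \eqref{eq:sigma_alpha_problem_uncertain_formulation} (your own caveat ``available whenever the current iterate is feasible'' is precisely the problem, since bounded output-constraint violation at intermediate iterates is intrinsic to the scheme). The paper instead uses the exact-penalty function \eqref{eq:Lyapunov_function_definition}, $V(u)=\tilde\Phi(u)+\xi\sum_{i=1}^{l}\max\{0,C_ih(u)-d_i\}$, with $\xi$ dominating the Lagrange multipliers as in \eqref{eq:xi_sup_property}; the descent inequality is obtained from the full KKT conditions (stationarity \emph{and} complementary slackness, which generate the terms $\sum_i \mu_i\max\{0,C_ih(u)-d_i\}$ that the penalty weight $\xi$ must absorb), and the new violation at $u^+$ is controlled by Lemma \ref{lem:output_violation_bound}, whose bound now carries the uncertainty-driven term $-\alpha C_i\delta(u)\sigma_\alpha(u)$ coming from the perturbed output constraint. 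None of this machinery appears in your outline, and without it your one-step inequality is simply unavailable at output-infeasible iterates.

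Second, you correctly diagnose the decisive obstacle --- the mixed terms $\alpha\,\nabla_y\Phi(u,y)|_{y=h(u)}\,\delta(u)\,w$ and (once the penalty is in place) $\alpha\,C_i\delta(u)w$ are linear in $\|w\|$, so a crude bound yields only convergence to a neighbourhood of size scaling with $M_\delta M_\nabla$, i.e.\ the approximate guarantee of \cite{Colombino2019} --- but you then leave exactly that step open (``this is the step I would scrutinise most carefully''), so the proposal stops short of a proof of the theorem. The paper's Lemma \ref{lem:Lyapunov_difference} is where this absorption is actually carried out: the cross terms are rewritten through the identity \eqref{eq:scalar_product_rewrite} and the triangle inequality \eqref{eq:chain_of_inequalities_from_triangle_one}, leading to \eqref{eq:final_Lyapunov} and the explicit step-size bound \eqref{eq:alpha_bound_in_the_proof} with $\alpha^* = 2\lambda_{min}(G(u))/\bigl(M_\delta M_\nabla + L + \xi\sum_i l_i/2 + \|C_i\|M_\delta\bigr)$; convergence is then concluded via the invariance principle \cite{Lasalle1976} by showing $w=0$ on the limit set, rather than by the telescoping argument you propose (which, with your unresolved linear cross term, would not close either). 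Your instinct that this is where the improvement over \cite{Colombino2019} ``is won or lost'' is sound --- it is indeed the most delicate passage of the paper's own proof, where the cross terms must acquire an $\alpha^2$ prefactor between \eqref{eq:bound_on_alpha_delta_nabla_Phi} and \eqref{eq:Lyapunov_difference_almost_at_the_end_with_bounds} for the chosen $\alpha^*$ to yield $V(u^+)\le V(u)$ --- but diagnosing the obstacle is not the same as overcoming it, and as written your attempt establishes at best the neighbourhood result it set out to surpass.
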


%%%%%%%%%%%%%%%%%%%%%%%%%%%%%%%%%%%%%%%%%%%%%%%%%%%%%%%%%%%%%%%%%%%%%%%%%%%%%%%%
\section{Bioprocessing example}
\label{sec:example}
We will consider a continuous perfusion bioprocess example for which a certain product concentration in the bioreactor,  at steady state, has to be maximised. We consider the metabolic network in Fig. \ref{fig:fict_metabolic_net}.
\begin{figure}[h!]
    \centering
    \includegraphics[width=0.45\textwidth]{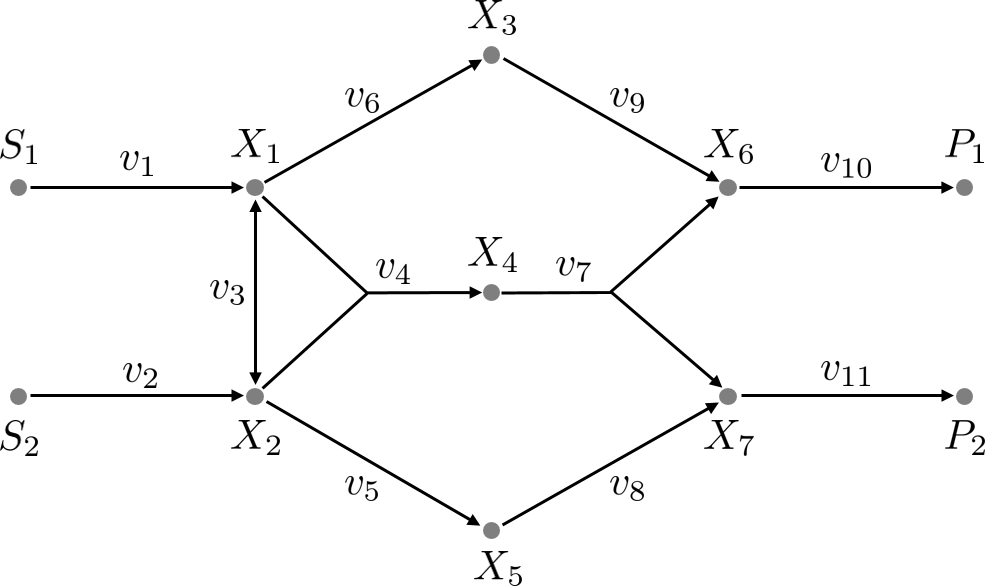}
    \caption{Example metabolic network.}
    \label{fig:fict_metabolic_net}
\end{figure}
where $S_1$ and $S_2$ are substrates, $P_1$ and $P_2$ are products and $X_i$, $i = 1, \dots, 7$ are internal metabolites. The arrows represent metabolic reactions that happen with rates $v_i$, generally dependent on both reactant and product concentrations. All the reactions in the example network are irreversible, with the only exception of the one whose rate is $v_3$.\\
After some biologically reasonable assumptions, we can rewrite the vector $v$ of fluxes as the combination of a set of basis vectors known as Elementary Flux Modes:
\begin{equation}
\label{eq:EFM_flux_rep}
    v = E \eta
\end{equation}
The interested reader is referred to \cite{Schuster1994,Gagneur2004,Bastin2008} for a more thorough discussion on the topic. It is remarked that the matrix $E$ in \eqref{eq:EFM_flux_rep} can be evaluated using available software tools (see METATOOL \cite{METATOOL}) and depends only on the network topology and the reversibility of the reactions. The matrix $E \in \mathbb{R}^{10 \times 4}$ is not reported here for the sake of space, but we collected all the numerical details of this section in \cite{arxivNumerical} to allow reproducibility of the results.\\
In a continuous perfusion setting, substrates can be continuously added in the reactor, while cell broth --i.e. all that developed in the bioreactor, including products, substrates and biomass in general -- can be continuously removed \cite{Chotteau2015}. By considering only external metabolites, the ones that could actually be measured, the expression \eqref{eq:EFM_flux_rep} and the fact that a continuous perfusion setting is considered, when the system will reach a steady-state the following  mass-balance equation will hold:
\begin{equation}
    \label{eq:mass_balance_eq}
    0 = N A_{ext} E \eta - F_1 c_{ext} + F_2 S_{in}
\end{equation}
where $N$ is the biomass at steady-state, $c_{ext} = \begin{bmatrix} s_1 &s_2 & p_1 & p_2 \end{bmatrix}^\top$ is the vector of external metabolites concentrations, $A_{ext}$ is the stoichiometric matrix relative to the external metabolites, $S_{in}$ is the vector of substrate concentrations in the feed-flow medium, while:
\begin{align}
\label{eq:F_matrices}
    F_1 &= \begin{bmatrix} F &0 &0 &0 \\ 0 &F &0 &0\\ 0 &0 &F &0\\ 0 &0 &0 &F \end{bmatrix}\\
    F_2 &= \begin{bmatrix} F &0 \\ 0 &F\\ 0 &0\\ 0 &0  \end{bmatrix}
\end{align}
where $F$ is defined as flow rate per bioreactor volume.
We assume standard Monod-type kinetics expressions for the components of $\eta$, dependent only on the substrates concentration, i.e.:
\begin{align}
    \label{eq:monod_expressions}
    \eta_j = \eta_{j,\max} {s_1 \over {s_1 + k^a_{1,j}}} \cdot {1 \over {1 + k^i_{1,j} s_1}} \cdot {s_2 \over {s_2 + k^a_{2,j}}} \cdot {1 \over {1 + k^i_{2,j} s_2}}
\end{align}

We will apply the Feedback Optimization scheme \eqref{eq:input_evolution_nominal} -- \eqref{eq:sigma_alpha_problem_nominal}, by considering:
\begin{align*}
    y &\leftarrow c_{ext}\\
    u &\leftarrow S_{in}\\
    \Phi &\leftarrow \lambda^\top y\\
    G(u) &\leftarrow \mathbb{I}^2
\end{align*}
We choose to maximize the concentration of the product $P_2$ so that, given the expression of $c_{ext}$, one have:
\begin{equation*}
    \lambda^\top = \begin{bmatrix} 0 &0 &0 &1 \end{bmatrix}
\end{equation*}
The set of feasible inputs is defined by a set of lower and upper bounds on the inflow medium concentrations i.e.:
\begin{align*}
    0 \le \underline{S_{in}^1} \le S_{in}^1 \le \overline{S_{in}^1}\\
    0 \le \underline{S_{in}^2} \le S_{in}^2 \le \overline{S_{in}^2}
\end{align*}
In the same way, constraints on the output variables i.e. the external metabolites concentration at steady-state in the bioreactor, are given in terms of lower and upper bounds:
\begin{align*}
    0 \le \underline{s_i} \le s_i \le \overline{s_i}, \quad i \in \{1,2\}\\
    0 \le \underline{p_i} \le p_i \le \overline{p_i}, \quad i \in \{1,2\}
\end{align*}
Given the model \eqref{eq:mass_balance_eq}, we can obtain the Jacobian of the input-output steady-state map $y = h(u)$ by applying the implicit function theorem, which leads to:
\begin{equation}
    \label{eq:input_output_map_Jacobian}
    \nabla h(u) = [N A_{ext} E \nabla \eta(y) - F_1]_{y = h(u)}^{-1} F_2
\end{equation}
Being extremely difficult to model exactly the kinetic expression of $\eta$ we will assume to know an approximation $\hat{\eta}$ of it, which will lead to:
\begin{equation}
    \label{eq:input_output_map_Jacobian}
    \widehat{\nabla h}(u) = [N A_{ext} E \nabla \hat{\eta}(y) - F_1]_{y = h(u)}^{-1} F_2
\end{equation}
Following the notation of Section \ref{sec:robustness_result} we have:
\begin{equation*}
    \delta(u) = \widehat{\nabla h}(u) - {\nabla h}(u)
\end{equation*}
implying:
\begin{align*}
    ||\delta(u)|| \le &||[N A_{ext} E \nabla \hat{\eta}(y) - F_1]_{y = h(u)}^{-1}||\cdot||F_2|| \\ + &||[N A_{ext} E \nabla \eta(y) - F_1]_{y = h(u)}^{-1}|| \cdot||F_2|| 
\end{align*}
By assuming that the matrices $N A_{ext} E \nabla \hat{\eta}(y) - F_1$ and $N A_{ext} E \nabla \eta(y) - F_1$ are non-singular for any $u \in \mathcal{U}$ and considering the matrix norm induced by the $2$-norm of vectors and the fact that $\mathcal{U}$ is compact, it follows that $\delta(u)$ is norm-bounded and continuous.\\
We will now examine two cases: the first one in which the kinetics will be affected by uncertainties in the parameters of the Monod functions, while the second case considers more systematic errors in the modelling, which in this particular case will be simulated by removing one or more columns of the matrix $E$, as well as the corresponding components of the vector $\eta$, in \eqref{eq:EFM_flux_rep}.
\subsection{Parametric uncertainties}
A random perturbation of the kinetic parameters in a $50\%$ range is considered, obtained by uniformly sampling any parameter $\theta$ in the range $[0.5 \bar{\theta}, 1.5 \bar{\theta}]$ where $\bar{\theta}$ is the nominal value of the considered parameter. Six realizations of the uncertainty are simulated. The obtained results are shown in Figure \ref{fig:uncertain_parameters_test} and show clearly how, with a low $\alpha$ (in this case $\alpha = 0.0015$), convergence of the concentration of $P_2$ to the optimum value can be obtained, despite the large parametric uncertainty.
\begin{figure}[h!]
    \centering
    \includegraphics[width=0.45\textwidth]{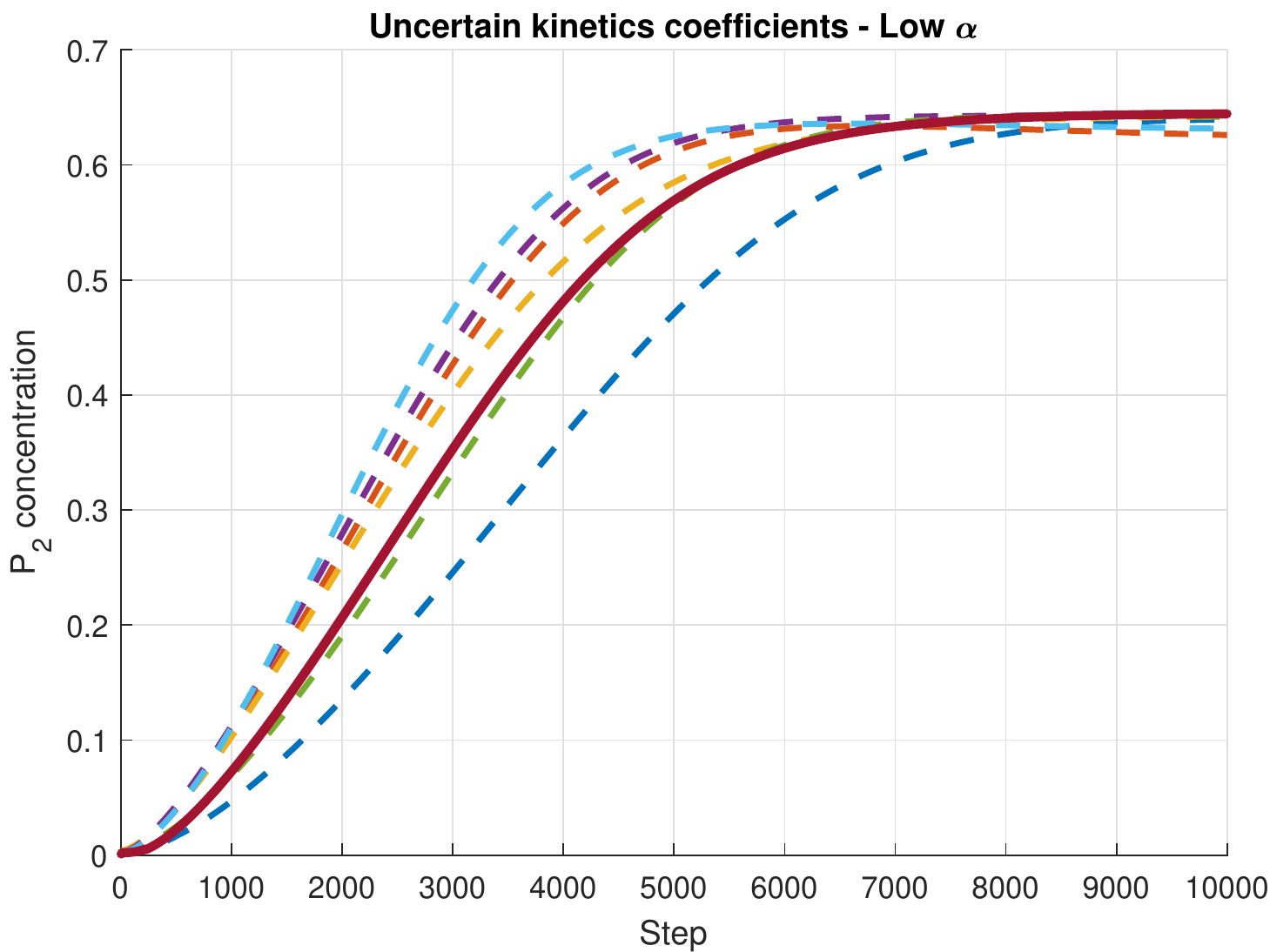}
    \caption{$P_2$ concentration evolution, with a low $\alpha$, for perturbed kinetics parameters. The solid line is the curve obtained in the nominal case, at the chosen $\alpha$, while the dashed lines are the curves obtained in six different realizations of the uncertainty.}
    \label{fig:uncertain_parameters_test}
\end{figure}
% \begin{figure}[h!]
%     \centering
%     \includegraphics[width=0.45\textwidth]{imgs/uncertain_coefficient_high_alpha_p2_conc.eps}
%     \caption{$P_2$ concentration evolution, with a high $\alpha$, for perturbed kinetics parameters.}
%     \label{fig:missing_kinetics_test}
% \end{figure}
\subsection{Missing kinetics components}
In this second case we consider a more systematic error in the kinetics modelling. In particular we consider six different cases, in five of which one or two columns of $E$ are removed, simulating the event where part of the kinetics has not been modelled. As in the previous case, in Figure \ref{fig:missing_kinetics_test} it is possible to observe convergence of the concentration of $P_2$ to the real optimum value when the step of the Feedback Optimization algorithm is chosen to be small enough (in this case $\alpha = 0.0015$). 
% Figure \ref{fig:missing_kinetics_high_alpha} shows how the choice of an higher $\alpha$ compromise the convergence property of the algorithm to the optimum value.
\begin{figure}[h!]
    \centering
    \includegraphics[width=0.45\textwidth]{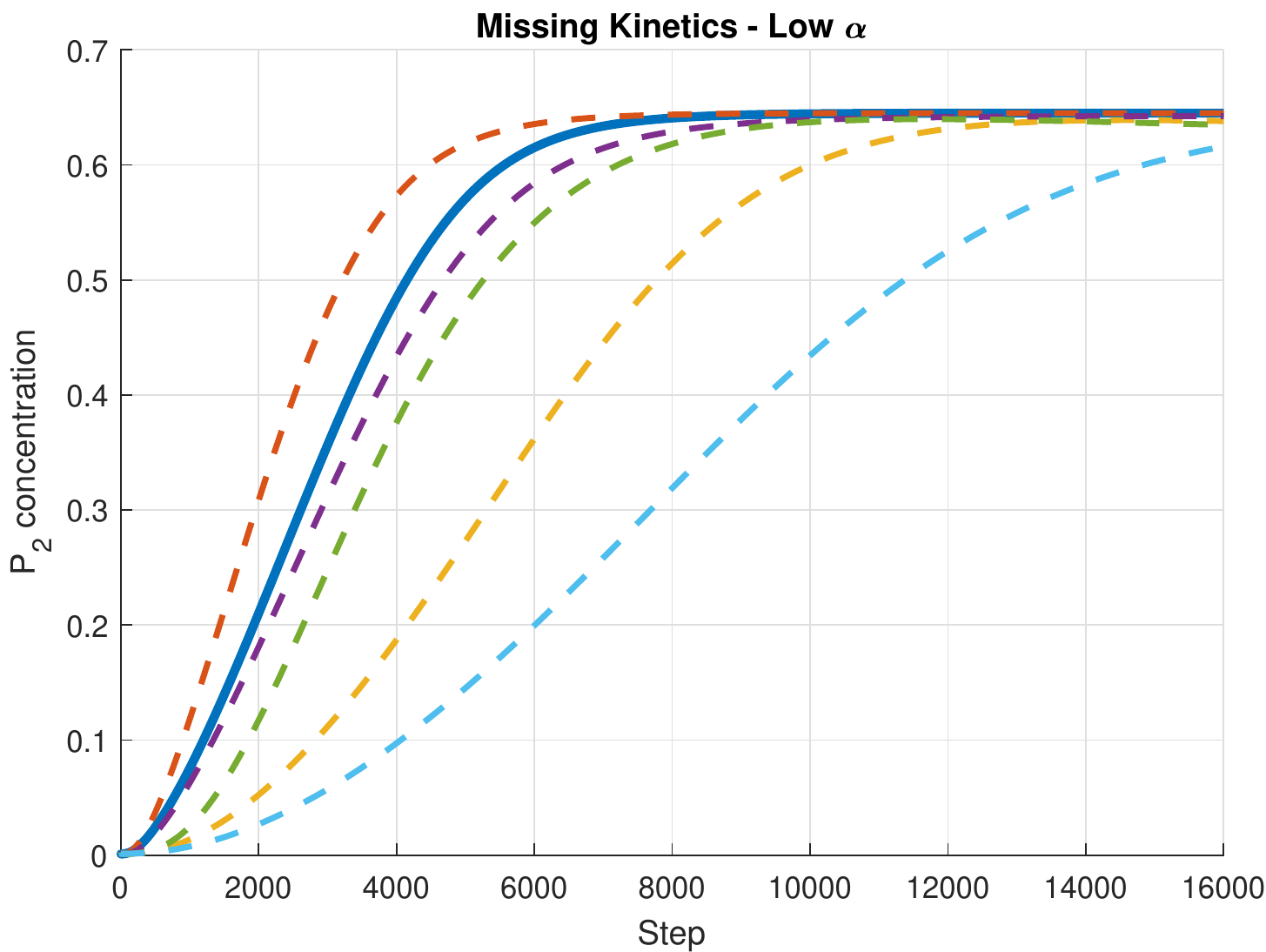}
    \caption{$P_2$ concentration evolution, with a low $\alpha$, when parts of the kinetics are not modelled. The solid line is the curve obtained in the nominal case, at the chosen $\alpha$, while the dashed lines are the curves obtained in the five cases where one or two columns of $E$ are removed.}
    \label{fig:missing_kinetics_test}
\end{figure}
% \begin{figure}[h!]
%     \centering
%     \includegraphics[width=0.45\textwidth]{}
%     \caption{$P_2$ concentration evolution, with a high $\alpha$.}    \label{fig:missing_kinetics_high_alpha}
% \end{figure}
\section{Conclusions and further developments}
\label{sec:conclusion}
Feedback optimization is a powerful control technique, when a system's steady-state output should be driven to optimize an objective function. In this work we considered a framework, available in literature, in which the steady-state input-output map is unknown and input and output constraints are included, and we extended the convergence result of the algorithm to the case where bounded uncertainties are present in the map Jacobian. A bioprocessing example has been discussed, to show the potential applicability of the method, to the problem of optimizing the feed-flow medium composition for a bioreactor in continuous perfusion mode, to maximise the yield of molecules of interest. The high complexity of these systems, together with the difficulties of defining an accurate model, inspire the use of a feedback scheme in which output measurements are directly used and a degree of robustness can be expected. \\
There are possible future developments of this work. In particular, while the class of admitted uncertainty is very large, asking for their continuity with respect to the input will automatically exclude the possibility of considering random noise or to use non-smooth approximation of the Jacobian (e.g. the sign of the Jacobian). Moreover we recognize that condition \eqref{eq:bounded_nabla_y_Phi_M_nabla_condition} can be limiting and possibly difficult to verify in general, hence future works should address how this can be relaxed. Another main issue might be the convergence speed of the algorithm. In fact, while choosing a low step $\alpha$ guarantees robustness of the approach, it also worsen the convergence speed of the algorithm. This is a problem in particular in the bioprocessing area, as any single experiment can be expensive, both in terms of resources and time. Possible heuristics where the step-size is dynamically changed can be considered, and should be further investigated. Finally the approach described in Section \ref{sec:example} should be experimentally validated. 
\section*{Appendix - Proof of Theorem \ref{thm:convergence_uncertain setting}}
\begin{proof}
We start by rewriting \eqref{eq:sigma_alpha_problem_uncertain_formulation}, as:
\begin{align}
\label{eq:sigma_alpha_problem_uncertain_expanded}
    \hat{\sigma_\alpha}(u,y) := &\arg\min\limits_{w \in \mathbb{R}^p} || w + G^{-1}(u)(H(u)^\top \nabla \Phi(u,y)^\top + \\ &\delta(u)^\top \nabla_y \Phi(u,y)^\top)||^2_{G(u)}\\
    s.t.  \quad & A(u + \alpha w) \le b  \nonumber\\
          \quad & C(y  +\alpha \nabla h(u) w + \alpha \delta(u) w) \le d \nonumber
\end{align}
By closing the feedback loop (i.e. by considering $y = h(u)$) we have $\tilde\Phi(u) = \Phi(u,h(u))$, and for the rest of the proof we will have:
\begin{align}
    \label{eq:input_evolution_feedback}
    u^+ = &u + \alpha \sigma_\alpha(u) =: T(u)\\
    \label{eq:sigma_alpha_uncertain_feedback}
    \sigma_\alpha(u) = & \arg\min\limits_{w \in \mathbb{R}^p} || w + G^{-1}(u)[\nabla\tilde{\Phi}(u)^\top + \\ &\delta(u)\nabla_y\Phi(u,y)|_{y = h(u)}]||_{G(u)}^2 \\ \label{eq:feedback_constraint_on_input} \text{s.t.} \quad & A(u + \alpha w) \le b  \\ \label{eq:feedback_constraint_on_output}
          \quad & C(h(u)  +\alpha \nabla h(u) w + \alpha \delta(u) w) \le d  
\end{align}

We notice that, given that $\nabla h$ and $\nabla\Phi$ are Lipschitz and that $\mathcal{U}$ is compact, the term $\nabla\tilde{\Phi} $ is Lipschitz.\\  
To apply the invariance principle \cite{Lasalle1976} we first need to prove that the map $u^+ = T(u)$ is invariant with respect to $\mathcal{U}$ and that it is continuous in $u$.\\
These two facts can be proven by following the same steps as in \cite{Haberle2020}, as \eqref{eq:feedback_constraint_on_input} holds and the dependence from $w$, of the functions involved in \eqref{eq:sigma_alpha_uncertain_feedback}--\eqref{eq:feedback_constraint_on_output}, is the same in the objective function, and it is still linear in the constraints.\\
Before proceeding we recall the following Descent Lemma \cite{Bertsekas1999}, which will be instrumental in the following.
\begin{lemma}[Descent Lemma]
\label{lem:descent_lemma}
Given a continuously differentiable function $f : \mathbb{R}^p \to \mathbb{R}$ with L-Lipschitz derivative $\nabla f$, for all $x$, $z \in \mathbb{R}^p$
it holds that $f(z) \le f(x) + \nabla f(x)(z - x) + {L \over 2} || z - x ||^2$. $\hfill \blacktriangleleft$
\end{lemma}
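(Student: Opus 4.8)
The plan is to reduce the statement to a one-dimensional estimate along the segment joining $x$ and $z$, and then invoke the Lipschitz bound on $\nabla f$ inside an integral. Concretely, I would parametrize the segment by $g(t) := f(x + t(z-x))$ for $t \in [0,1]$. Since $f$ is continuously differentiable, $g$ is $C^1$ with $g'(t) = \nabla f(x + t(z-x))\,(z-x)$, where $\nabla f$ is the row-vector gradient and $z-x$ is a column, so the product is the scalar directional derivative. The fundamental theorem of calculus then gives
\[
f(z) - f(x) = g(1) - g(0) = \int_0^1 \nabla f(x + t(z-x))\,(z-x)\,dt.
\]

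Next I would isolate the first-order term. Subtracting $\nabla f(x)(z-x) = \int_0^1 \nabla f(x)(z-x)\,dt$ from both sides yields
\[
f(z) - f(x) - \nabla f(x)(z-x) = \int_0^1 \big[\nabla f(x + t(z-x)) - \nabla f(x)\big](z-x)\,dt.
\]
The remaining work is to bound the right-hand side. Taking absolute values and pulling them inside the integral, I would apply the induced-norm inequality $|Mv| \le \|M\|\,\|v\|$ coming from \eqref{eq:vector_induced_norm_matrix_norm} (with $M$ the $1\times p$ row difference of gradients), followed immediately by the $L$-Lipschitz hypothesis on $\nabla f$, giving the pointwise estimate
\[
\big|\big[\nabla f(x + t(z-x)) - \nabla f(x)\big](z-x)\big| \le L\,\|t(z-x)\|\,\|z-x\| = L\,t\,\|z-x\|^2 .
\]

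Finally I would integrate this bound over $t \in [0,1]$, obtaining $\int_0^1 L\,t\,\|z-x\|^2\,dt = \tfrac{L}{2}\|z-x\|^2$, and conclude $f(z) - f(x) - \nabla f(x)(z-x) \le \tfrac{L}{2}\|z-x\|^2$, which is exactly the claimed inequality. This argument is entirely standard, so I do not anticipate a genuine obstacle; the only points demanding care are that continuous differentiability legitimizes the fundamental theorem of calculus for $g$ (its derivative is continuous, hence integrable over $[0,1]$), and that the norm manipulations remain consistent with the paper's conventions, namely that $\nabla f$ is treated as a row vector and that $\|\cdot\|$ applied to the row factor is the operator norm induced by the chosen vector norm, so that the Cauchy--Schwarz-type step $|Mv|\le\|M\|\,\|v\|$ is precisely the definition \eqref{eq:vector_induced_norm_matrix_norm}.
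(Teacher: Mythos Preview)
Your argument is the standard and correct proof of the Descent Lemma: parametrize the segment, apply the fundamental theorem of calculus, subtract the first-order term, and bound the gradient difference via the Lipschitz hypothesis to get the factor $\tfrac{L}{2}\|z-x\|^2$ after integrating $t$ from $0$ to $1$. There is no gap.

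Note, however, that the paper does not supply its own proof of this lemma; it merely recalls the statement and cites \cite{Bertsekas1999} as the source, using it as a black box in the proofs of Lemma~\ref{lem:output_violation_bound} and Lemma~\ref{lem:Lyapunov_difference}. So there is nothing to compare your approach against: what you have written is exactly the textbook argument one finds in the cited reference, and it is more than the paper itself provides.
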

We now prove the following Lemma, showing how the constraint violation on the output, committed at every step of the algorithm, can be bounded. 
\begin{lemma}
\label{lem:output_violation_bound}
 Let $l_i$ be the Lipschitz constant for $C_i \nabla h(u)$ , $\forall i \in 1,\ \dots, \ l$. Given the iteration \eqref{eq:input_evolution_feedback}, and any $u \in \mathcal{U}$ we have:
\begin{equation*}
    C_i h(u^+)  - d_i \le - \alpha C_i \delta(u) \sigma_\alpha(u) + {l_i\over 2}||\alpha \sigma_\alpha(u)||^2 
\end{equation*}
\end{lemma}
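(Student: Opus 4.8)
The plan is to apply the Descent Lemma (Lemma~\ref{lem:descent_lemma}) to the scalar function $f_i(u) := C_i h(u)$, whose gradient is $\nabla f_i(u) = C_i \nabla h(u)$ and which, by hypothesis, has an $l_i$-Lipschitz derivative. Taking $x = u$ and $z = u^+ = u + \alpha\sigma_\alpha(u)$ in the Descent Lemma, and recalling from \eqref{eq:input_evolution_feedback} that the increment is $u^+ - u = \alpha\sigma_\alpha(u)$, I obtain
\[
C_i h(u^+) \le C_i h(u) + \alpha C_i \nabla h(u)\sigma_\alpha(u) + \frac{l_i}{2}\|\alpha\sigma_\alpha(u)\|^2 .
\]

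The second ingredient is the feasibility of $\sigma_\alpha(u)$ with respect to the output constraint \eqref{eq:feedback_constraint_on_output} of the optimization problem defining $\sigma_\alpha$. Since $\sigma_\alpha(u)$ is by construction a minimizer over the feasible set, it satisfies, row by row,
\[
C_i h(u) + \alpha C_i \nabla h(u)\sigma_\alpha(u) + \alpha C_i \delta(u)\sigma_\alpha(u) \le d_i ,
\]
which I rearrange into $C_i h(u) + \alpha C_i \nabla h(u)\sigma_\alpha(u) - d_i \le -\alpha C_i \delta(u)\sigma_\alpha(u)$. Substituting this bound for the first two terms on the right-hand side of the Descent-Lemma inequality then yields exactly the claimed estimate, and the argument holds verbatim for every $u\in\mathcal{U}$ and every index $i$.

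The calculation is short, so there is no serious obstacle; the only point requiring care is the interplay between the two Jacobians. The Descent Lemma is driven by the \emph{true} gradient $C_i\nabla h$, whereas the feasibility constraint \eqref{eq:feedback_constraint_on_output} is imposed through the \emph{estimated} sensitivity $\nabla h(u) + \delta(u) = \widehat{\nabla h(u)}$. It is precisely this mismatch that produces the extra term $-\alpha C_i\delta(u)\sigma_\alpha(u)$, absent from the nominal bound in \cite{Haberle2020}: one must keep $\delta(u)$ explicitly on the constraint side rather than silently absorbing it into the gradient. No sign assumptions or smallness of $\alpha$ are needed for the lemma itself; the resulting estimate is then combined downstream with the norm bound \eqref{eq:bounded_delta_M_delta_condition} on $\delta$ to control the per-step output violation uniformly.
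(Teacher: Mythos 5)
Your proposal is correct and follows essentially the same route as the paper's own proof: apply the Descent Lemma to $C_i h(u)$ along the step $u^+ - u = \alpha\sigma_\alpha(u)$, then invoke feasibility of $\sigma_\alpha(u)$ with respect to the perturbed output constraint \eqref{eq:feedback_constraint_on_output} to replace $C_i h(u) + \alpha C_i \nabla h(u)\sigma_\alpha(u)$ by $d_i - \alpha C_i \delta(u)\sigma_\alpha(u)$. Your explicit remark on the mismatch between the true Jacobian in the Descent Lemma and the estimated sensitivity $\widehat{\nabla h(u)}$ in the constraint is exactly the point the paper handles implicitly, so nothing is missing.
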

\begin{proof}
The proof of this fact is similar to \cite[Lemma 4]{Haberle2020}. First of all we point out that $C_i \nabla h(u)$ is Lipschitz, as $\nabla h(u)$ is Lipschitz and $C_i$ is a constant vector.\\
Using the Descent Lemma  we have:
\begin{equation*}
    C_i h(u^+) - C_i h(u) \le C_i \nabla h(u) (\alpha \sigma_{\alpha}(u)) + {l_i\over 2}||\alpha \sigma_\alpha(u)||^2 
\end{equation*}
Given that \eqref{eq:feedback_constraint_on_output} should be satisfied by $ w = \sigma_\alpha(u)$, it follows:
\begin{equation*}
    C_i h(u^+)  - d_i \le - \alpha C_i \delta(u) \sigma_\alpha(u) + {l_i\over 2}||\alpha \sigma_\alpha(u)||^2 
\end{equation*} $\hfill \qed$
\end{proof}

The main idea of the proof is to find a Lyapunov function $V$ and show that this is non-increasing along the trajectories of $u$, for any initial condition $u_0$, so that the Invariance Principle \cite{Lasalle1976} can be applied. We will consider the same Lyapunov function as in \cite{Haberle2020}:
\begin{equation}
\label{eq:Lyapunov_function_definition}
V(u) = \tilde{\Phi}(u) + \xi [\sum\limits_{i=1}^l \max\{0,C_i h(u) - d_i\}] 
\end{equation}
where $\xi \in \mathbb{R}$ has the property that:
\begin{equation}
\label{eq:xi_sup_property}
    \xi \ge \sup\limits_{\substack{u \in \mathcal{U} \\ i = 1, \dots, l}} \{\mu_i^*(u)\}
\end{equation}
with $\mu_i^*(u)$ being the Lagrange multiplier with respect to the $i$-th constraint in \eqref{eq:feedback_constraint_on_output}. It is remarked that such $\xi$ exists due to \cite[Theorem 2]{Haberle2020}.
To show the monotonicity property of the Lyapunov function with respect to any given trajectory $\bar{u} :=\{u_0, u_1, \dots\}$, we will rewrite problem \eqref{eq:sigma_alpha_uncertain_feedback} -- \eqref{eq:feedback_constraint_on_output} in an equivalent form, as follows:
\begin{align}
   \label{eq:transformed_sigma_alpha_problem}
   \sigma_\alpha(u) = & \arg\min\limits_{w \in \mathbb{R}^p} \alpha( {1 \over 2}w^\top G(u) w + \nabla \tilde \Phi(u)^\top w  + \\ &\delta(u) \nabla_y\Phi(u,y)|_{y = h(u)}^\top w) \\ \label{eq:feedback_constraint_on_input_transformed} \text{s.t.} \quad & \alpha A w \le b - A u  \\ \label{eq:feedback_constraint_on_output_transformed}
   \quad &  \alpha C [\nabla h(u) + \delta(u)] w  \le d - C(h(u) 
\end{align}
where all the terms independent of $w$ in the objective have been removed and the objective function has been multiplied by the positive scalar $\alpha \over 2$.
Before stating and proving the Lemma about the non-increasing nature of $V$, we recollect the following KKT conditions, where $\nu$ and $\mu$ are the Lagrange multipliers of the constraints \eqref{eq:feedback_constraint_on_input_transformed} and \eqref{eq:feedback_constraint_on_output_transformed} respectively:
\begin{itemize}
    \item \textbf{Stationarity}: 
    \begin{align}
    \label{eq:stationarity}
        \alpha w^\top G(u) + \alpha \nabla \tilde \Phi(u) + \alpha \delta(u)\nabla_y \Phi(u,y)|_{y = h(u)} + \\ \alpha \nu^\top A + \alpha \mu^\top[C(\nabla h(u) + \delta(u))] = 0 \nonumber
    \end{align}
    \item \textbf{Complementary Slackness}: 
    \begin{align}
        \label{eq:complementary_slackness}
        \nu_j(\alpha A_j w - b_j + A_j u) &= 0\\  j = 1, \dots, q& \nonumber\\ \mu_i ( \alpha C_i \nabla h(u) w + \alpha C_i \delta(u) w - d_i + C_i h(u))&=0 \nonumber\\ i = 1, \dots, l& \nonumber
    \end{align}
    \item \textbf{Primal Feasibility}:
        \begin{align}
            \label{eq:primal_feasibility}
            \alpha A w &\le b - A u  \\ 
            \alpha C [\nabla h(u) + \delta(u)] w  &\le d - C(h(u)  \nonumber
        \end{align}
    \item \textbf{Dual Feasibility}:
        \begin{align}
            \label{eq:dual_feasibility}
            \nu_i \ge 0\\
            \mu_i \ge 0 \nonumber
        \end{align}
\end{itemize}
The above conditions will hold at the optimal points of \eqref{eq:transformed_sigma_alpha_problem} -- \eqref{eq:feedback_constraint_on_output_transformed}.
The following lemma can now be proved.
\begin{lemma}
\label{lem:Lyapunov_difference}
 Let Assumptions \ref{assum:feasibility_input_set_is_non_empty}, \ref{assum:U_is_compact} and \ref{assum:LICQ_qualification_mod} be satisfied. Let $V$ be the Lyapunov function defined in \eqref{eq:Lyapunov_function_definition}, where $\xi$ satisfies \eqref{eq:xi_sup_property}. Let $L$ be the Lipschitz constant for $\nabla \tilde{\Phi}(u)$  and $l_i$ be the Lipschitz constant for $C_i \nabla h(u)$, $\forall i \in 1,\ \dots, \ l$.   Given the update rule \eqref{eq:input_evolution_feedback}, $V(u^+) \le V(u)$ is satisfied for all $u \in \mathcal{U}$, if:
 \begin{equation}
     \label{eq:alpha_bound_theorem}
     0 < \alpha < \alpha^* 
 \end{equation}
 with
 \begin{equation*}
     \alpha^* = 2 {\lambda_{min}(G(u)) \over M_\delta \cdot M_{\nabla} + L + \xi \sum\limits_{i=1}^l{l_i\over 2 } + ||C_i|| M_\delta }
 \end{equation*}
\end{lemma}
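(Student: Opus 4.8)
The plan is to show that $V(u^+) - V(u) \le 0$ by bounding the two contributions in the Lyapunov function $V$ separately: the objective-decrease term $\tilde\Phi(u^+) - \tilde\Phi(u)$ and the penalty term $\xi\sum_i[\max\{0,C_i h(u^+)-d_i\} - \max\{0,C_i h(u)-d_i\}]$. For the first term I would invoke the Descent Lemma (Lemma \ref{lem:descent_lemma}) applied to $\tilde\Phi$, whose gradient is $L$-Lipschitz, giving $\tilde\Phi(u^+)\le\tilde\Phi(u)+\alpha\nabla\tilde\Phi(u)\sigma_\alpha(u)+\tfrac{L}{2}\|\alpha\sigma_\alpha(u)\|^2$. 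For the penalty term I would use Lemma \ref{lem:output_violation_bound}, which already bounds each $C_i h(u^+)-d_i$, together with the fact that $\max\{0,C_i h(u)-d_i\}\ge 0$, so the increase in each penalty is controlled by $-\alpha C_i\delta(u)\sigma_\alpha(u)+\tfrac{l_i}{2}\|\alpha\sigma_\alpha(u)\|^2$.

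\emph{Next} I would use the KKT stationarity condition \eqref{eq:stationarity} to eliminate the linear term $\nabla\tilde\Phi(u)\sigma_\alpha(u)$. Solving stationarity for $\nabla\tilde\Phi(u)$ and substituting into the Descent-Lemma bound produces the crucial quadratic term $-\alpha\sigma_\alpha(u)^\top G(u)\sigma_\alpha(u)$ (the descent direction decreases the objective) plus cross-terms involving the dual multipliers $\nu,\mu$ and the uncertainty $\delta(u)$. Using complementary slackness \eqref{eq:complementary_slackness}, the multiplier-$\nu$ contributions collapse against the input-constraint terms, and the $\mu$-contributions combine with the penalty increase. Here the choice \eqref{eq:xi_sup_property}, $\xi\ge\sup\mu_i^*(u)$, is what lets the penalty term dominate the $\mu$-weighted output-constraint terms, exactly as in the nominal proof of \cite{Haberle2020}.

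\emph{The hard part} — and the reason the lemma statement lists $M_\delta$, $M_\nabla$, and the $\|C_i\|M_\delta$ term — is handling the \textbf{mixed terms} that couple $\delta(u)$ with $\sigma_\alpha(u)$ and with $\nabla_y\Phi$, which have no counterpart in the nominal case. After the substitutions above I would collect all first-order-in-$\alpha$ terms into a single expression of the form $-\alpha\sigma_\alpha^\top G\sigma_\alpha + \alpha(\text{uncertainty cross-terms}) + O(\alpha^2\|\sigma_\alpha\|^2)$. The uncertainty cross-terms are where $\delta(u)\nabla_y\Phi$ appears (bounded by $M_\delta M_\nabla\|\sigma_\alpha\|$ via \eqref{eq:bounded_delta_M_delta_condition}, \eqref{eq:bounded_nabla_y_Phi_M_nabla_condition} and Cauchy–Schwarz) and where $\|C_i\|M_\delta\|\sigma_\alpha\|$ enters from the $-\alpha C_i\delta(u)\sigma_\alpha$ penalty contribution. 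I would bound each cross-term using the norm bounds and the induced-norm inequality, factor out $\alpha\|\sigma_\alpha(u)\|^2$ (or $\alpha\|\sigma_\alpha(u)\|$ where appropriate, using $\lambda_{\min}(G)\|\sigma_\alpha\|^2\le\sigma_\alpha^\top G\sigma_\alpha$ to homogenize degrees), and require the bracketed coefficient to be nonpositive.

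\emph{Finally}, collecting the coefficient of $\alpha\|\sigma_\alpha(u)\|^2$ yields precisely the denominator $M_\delta M_\nabla + L + \xi\sum_{i=1}^l\tfrac{l_i}{2} + \|C_i\|M_\delta$ weighed against $2\lambda_{\min}(G(u))$; demanding that the net coefficient be negative gives the threshold $\alpha^* = 2\lambda_{\min}(G(u))/(M_\delta M_\nabla + L + \xi\sum_i\tfrac{l_i}{2} + \|C_i\|M_\delta)$. For any $0<\alpha<\alpha^*$ the dominant negative quadratic $-\alpha\sigma_\alpha^\top G\sigma_\alpha$ overwhelms the uncertainty and second-order terms, giving $V(u^+)\le V(u)$. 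The main delicacy I anticipate is keeping the bookkeeping of the $\delta$-coupled terms consistent — in particular ensuring that the $\nabla h$ used in Lemma \ref{lem:output_violation_bound} and the $\nabla h+\delta$ appearing in the KKT constraint \eqref{eq:feedback_constraint_on_output_transformed} are reconciled so that the genuine output map $h$ (not its estimate) governs the penalty, which is the whole point of enforcing the true-Jacobian constraint in \eqref{eq:sigma_alpha_problem_uncertain_formulation}.
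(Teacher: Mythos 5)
Your proposal follows essentially the same route as the paper's proof: the same decomposition of $V(u^+)-V(u)$, the Descent Lemma on $\tilde\Phi$, Lemma \ref{lem:output_violation_bound} for the penalty terms, KKT stationarity plus complementary slackness with $\xi \ge \sup_i \mu_i^*(u)$ to absorb the multiplier contributions, and the bounds $M_\delta$, $M_\nabla$, $\|C_i\|$ on the $\delta$-coupled cross terms, collected against $-\alpha\,\lambda_{\min}(G(u))\|w\|^2$ to read off the threshold $\alpha^*$. The only cosmetic difference is that you bound the mixed terms by direct Cauchy--Schwarz where the paper detours through the identity $u^\top v = \|\tfrac12 u + v\|^2 - \tfrac14\|u\|^2 - \|v\|^2$ and the triangle inequality (an equivalent estimate), and your "homogenizing degrees" remark for the terms linear in $\|w\|$ plays the same role as the paper's closing case analysis over $w=0$, $\|w\|\le 1$, $\|w\|>1$.
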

\begin{proof}
First of all we write $V(u^+)-V(u)$ as:
\begin{align}
\label{eq:Lyapunov_difference_beginning}
    V(u^+)-V(u) = \tilde\Phi(u^+) - \tilde\Phi(u) + \\ \xi [\sum\limits_{i=1}^l (\max\{0,C_i h(u^+) - d_i\} -\max\{0,C_i h(u) - d_i\}) ] \nonumber
\end{align}
We first examine the term $\tilde\Phi(u^+) - \tilde\Phi(u)$. Being  $\nabla \tilde \Phi$ $L-\text{Lipschitz}$, from the Descent Lemma it follows:
\begin{align*}
    \tilde\Phi(u^+) &\le \tilde\Phi(u) + \nabla \tilde{\Phi}(u) (\alpha \sigma_\alpha(u)) + {L \over 2}||\alpha \sigma_\alpha(u)||^2
\end{align*}
From now on we write $ w = \sigma_\alpha(u)$, for ease of notation, as we are assuming that \eqref{eq:transformed_sigma_alpha_problem} -- \eqref{eq:feedback_constraint_on_output_transformed} is solved to update the input $u$, hence:
\begin{align}
    \label{eq:Phi_difference_Lyapunov_proof_bound}
    \tilde\Phi(u^+) - \tilde\Phi(u) \le \alpha \nabla \tilde{\Phi}(u) w + {L \over 2} \alpha^2 ||w||^2
\end{align}
We can further bound the term $\alpha \nabla \tilde{\Phi}(u) w$ by recurring to the above KKT conditions, in particular using Stationarity and Complementary Slackness, and we can write:
\begin{align}
\label{eq:alpha_nabla_tilde_Phi_u_w_bound}
    \alpha \nabla \tilde \Phi(u) w \le -\alpha w^\top G(u) w \\ - \alpha \delta(u) \nabla_y \Phi(u,y)|_{y = h(u)} w +   \sum\limits_{i=1}^l \mu_i \max\{0, C_i h(u) -d_i\} \nonumber
\end{align}
where the inequality follows from the fact that $u \in \mathcal{U}$ and that $r \le \max\{0,\ r\}$, for all $r \in \mathbb{R}$.\\ \\
Moreover, by applying Lemma \ref{lem:output_violation_bound}, it follows:
\begin{equation}
    \label{eq:output_violation_constraint_Lyapunov_proof}
    \max\{0,C_i h(u^+) - d_i\} \le \max\{0,{l_i\over 2}\alpha^2||w||^2 - \alpha C_i \delta(u) w\}
\end{equation}\\ \\
Combining \eqref{eq:Lyapunov_difference_beginning} --  \eqref{eq:output_violation_constraint_Lyapunov_proof}, and by the definition of $\xi$ it follows:
\begin{align}
    \label{eq:Lyapunov_difference_all_bound_inside}
    &V(u^+)-V(u) \le \\ &\le -\alpha w^\top G(u) w + {L \over 2} \alpha^2 ||w||^2 + \nonumber \\ &- \alpha \delta(u) \nabla_y \Phi(u,y)|_{y = h(u)} w  + \nonumber \\ &\xi \sum\limits_{i=1}^l\max\{0,{l_i\over 2}\alpha^2||w||^2 - \alpha C_i \delta(u) w\} \nonumber
\end{align}
To further bound the difference $V(u^+)-V(u)$, we now need to deviate from the proof of Theorem \ref{thm:convergence_nominal} in \cite{Haberle2020} as, differently from the nominal case, various mixed terms dependent on $\delta(u)$ appear.\\
We recall the following property, valid for any couple of vectors $u$, $v \in \mathbb{R}^n$:
\begin{equation}
    \label{eq:scalar_product_rewrite}
    u^\top v = ||{1\over2}u + v||^2 - {1\over4}||u||^2 - ||v||^2
\end{equation}
which allows us to write:
\begin{align}
    \label{alpha_delta_u_rewritten_with_norms}
    &\alpha C_i \delta(u) w = \\ &= ||{1\over 2} (\alpha C_i \delta(u))^\top + w||^2 - {1\over4}
    ||(\alpha C_i \delta(u))^\top||^2 - ||w||^2 \nonumber 
\end{align}
Moreover, from the triangle inequality of norms, the following inequality holds, for any couple of vectors $u$, $v \in \mathbb{R}^n$:
\begin{equation}
    \label{eq:chain_of_inequalities_from_triangle_one}
     ||u + v||^2 \ge ||u||^2 + ||v||^2 - 2||u||\cdot ||v|| 
\end{equation}
From \eqref{eq:chain_of_inequalities_from_triangle_one} it follows:
\begin{align*}
    -||{1\over 2} (\alpha C_i \delta(u))^\top + w||^2 \le \\ \le -||{1\over 2} (\alpha C_i \delta(u))^\top||^2 - ||w||^2 + {1\over 2}  ||(\alpha C_i \delta(u))^\top||\cdot||w||
\end{align*}
which used together with \eqref{alpha_delta_u_rewritten_with_norms} implies:
\begin{align}
\label{eq:bound_on_max_0_li_2_etc}
    &\max\{0,{l_i\over 2}\alpha^2||w||^2 - \alpha C_i \delta(u) w\} \le \\ \le &\max\{0,{l_i\over 2 }\alpha^2||w||^2 + {1\over 2}  ||(\alpha C_i \delta(u))^\top||\cdot||w||\} = \nonumber \\ = &{l_i\over 2 }\alpha^2||w||^2 + {1\over 2}  ||(\alpha C_i \delta(u))^\top||\cdot||w||\nonumber
\end{align}
where the last equality follows from the positivity of all the components in the second term of the max function.\\
Following a similar reasoning for the term $\alpha \delta(u) \nabla_y \Phi(u,y)|_{y = h(u)} w$, we get:
\begin{align}
    \label{eq:bound_on_alpha_delta_nabla_Phi}
    &-\alpha \delta(u) \nabla_y \Phi(u,y)|_{y = h(u)} w  \le \\ \le &{1\over2}||\alpha \delta(u) \nabla_y\Phi(u,y)|^\top_{y=h(u)}|| \cdot ||w||  \nonumber
\end{align}
By the definition of matrix norm induced by vector norm, it follows
\begin{equation}
    ||A x || \le ||A|| \cdot ||x||, \quad \forall x \in \mathbb{R}^n
\end{equation}
for any $ A \in \mathbb{R}^{m \times n}$, which in turn implies that:
\begin{align}
    \label{eq:matrix_norm_driven_bound_on_matrix_vector_product}
    ||\alpha \delta(u) \nabla_y\Phi(u,y)|^\top_{y=h(u)}|| &\le ||\alpha \delta(u)|| \cdot ||\nabla_y\Phi(u,y)|^\top_{y=h(u)}||\\ ||(C_i \delta(u))^\top|| &\le ||C_i^\top|| \cdot ||\delta(u)|| \nonumber
\end{align}
From \eqref{eq:bound_on_max_0_li_2_etc} -- \eqref{eq:bound_on_alpha_delta_nabla_Phi} and \eqref{eq:matrix_norm_driven_bound_on_matrix_vector_product}, we can rewrite \eqref{eq:Lyapunov_difference_all_bound_inside} as:
\begin{align}
\label{eq:Lyapunov_difference_almost_at_the_end_with_bounds}
    V(u^+)-V(u) \le \\ \le -\alpha \lambda_{min}(G(u)) \cdot ||w||^2 + {L \over 2} \alpha^2 ||w||^2 +\nonumber\\  {1\over2}\alpha^2|| \delta(u) || \cdot || \nabla_y\Phi(u,y)|_{y=h(u)}|| \cdot ||w|| +\nonumber\\ \xi \sum\limits_{i=1}^l{l_i\over 2 }\alpha^2||w||^2 + {1\over 2}  \alpha^2 ||C_i|| \cdot ||\delta(u)||\cdot||w|| \nonumber
\end{align}
The terms $||\delta(u)||$ and $|| \nabla_y\Phi(u,y)|_{y=h(u)}||$ are bounded by hypothesis,
so \eqref{eq:Lyapunov_difference_almost_at_the_end_with_bounds} can be rewritten as:
\begin{align}
\label{eq:final_Lyapunov}
    V(u^+)-V(u) \le \\ \le -\alpha \lambda_{min}(G(u)) \cdot ||w||^2  + {L \over 2} \alpha^2 ||w||^2 + \\ {1\over2}\alpha^2 M_\delta \cdot M_{\nabla} \cdot ||w||    + {1\over 2}  \alpha^2 ||C_i|| M_\delta \cdot||w|| +\\ \xi \sum\limits_{i=1}^l{l_i\over 2 }\alpha^2||w||^2 \nonumber
\end{align}
By considering the three cases where $w = 0$, $||w||\le 1$ or $||w|| > 1$, it holds that, by choosing $\alpha$ satisfying:
\begin{equation}
    \label{eq:alpha_bound_in_the_proof}
    0 < \alpha < 2 {\lambda_{min}(G(u)) \over M_\delta \cdot M_{\nabla} + L + \xi \sum\limits_{i=1}^l{l_i\over 2 } + ||C_i|| M_\delta }
\end{equation}
we get $V(u^+) \le V(u)$. $\hfill \qed$\\
\end{proof}
With $\alpha$ selected as in Lemma \ref{lem:Lyapunov_difference}, we can apply the Invariance Principle \cite{Lasalle1976}, which implies that for some $c \in V(\mathcal{U})$, the trajectory $\mathbf{u} = \{u_0, u_1, \dots\}$ will converge to the largest invariant of:
\begin{equation*}
    V^{-1}(c) \cap \{ u \in \mathcal{U} \ | \ V(u^+) = V(u)\}
\end{equation*}
In the above set, i.e. when $V(u^+) = V(u)$, it holds:
\begin{align*}
    0  \le -\alpha w^\top G(u) w + {L \over 2} \alpha^2 ||w||^2 + \\ - \alpha \delta(u) \nabla_y \Phi(u,y)|_{y = h(u)} w +  \\-\sum\limits_{i=1}^l (\xi - \mu_i) \max\{0, C_i h(u) -d_i\}  + \\   \xi \sum\limits_{i=1}^l\max\{0,{l_i\over 2}\alpha^2||w||^2 - \alpha \delta(u) w\} 
\end{align*}
\\
However, by choosing $\alpha$ as in Lemma \ref{lem:Lyapunov_difference}, the right-hand side of the above inequality is negative if $w \ne 0$, implying $w = 0$. By the update rule \eqref{eq:input_evolution_nominal}, it follows that, in the convergence set, $u^+ = u$ and that $C_i h(u) \le d_i$, $\forall i \in 1, \dots, l$. We can conclude the proof as in \cite{Haberle2020}. $\hfill \qed$ \\
\end{proof}
%%%%%%%%%%%%%%%%%%%%%%%%%%%%%%%%%%%%%%%%%%%%%%%%%%%%%%%%%%%%%%%%%%%%%%%%%%%%%%%%
% \section{Acknowledgments}

%%%%%%%%%%%%%%%%%%%%%%%%%%%%%%%%%%%%%%%%%%%%%%%%%%%%%%%%%%%%%%%%%%%%%%%%%%%%%%%%

% \begin{thebibliography}{99}

% \bibitem{c1}
% J.G.F. Francis, The QR Transformation I, {\it Comput. J.}, vol. 4, 1961, pp 265-271.

% \bibitem{c2}
% H. Kwakernaak and R. Sivan, {\it Modern Signals and Systems}, Prentice Hall, Englewood Cliffs, NJ; 1991.

% \bibitem{c3}
% D. Boley and R. Maier, "A Parallel QR Algorithm for the Non-Symmetric Eigenvalue Algorithm", {\it in Third SIAM Conference on Applied Linear Algebra}, Madison, WI, 1988, pp. A20.

% \end{thebibliography}
\bibliographystyle{unsrt}
\bibliography{mirko}

\begin{thebibliography}{10}

\bibitem{Jokic2009}
A.~{Jokic}, M.~{Lazar}, and P.~P.~J. {van den Bosch}.
\newblock On constrained steady-state regulation: Dynamic kkt controllers.
\newblock {\em IEEE Transactions on Automatic Control}, 54(9):2250--2254, 2009.

\bibitem{Bernstein2019}
Andrey Bernstein, Emiliano Dall’Anese, and Andrea Simonetto.
\newblock Online primal-dual methods with measurement feedback for time-varying
  convex optimization.
\newblock {\em IEEE Transactions on Signal Processing}, 67(8):1978–1991, Apr
  2019.

\bibitem{Hauswirth2017}
A.~{Hauswirth}, A.~{Zanardi}, S.~{Bolognani}, F.~{Dörfler}, and G.~{Hug}.
\newblock Online optimization in closed loop on the power flow manifold.
\newblock In {\em 2017 IEEE Manchester PowerTech}, pages 1--6, 2017.

\bibitem{DallAnese2016}
Emiliano Dall'Anese and Andrea Simonetto.
\newblock Optimal power flow pursuit, 2016.

\bibitem{Ortmann2020}
Lukas Ortmann, Adrian Hauswirth, Ivo Caduff, Florian Dörfler, and Saverio
  Bolognani.
\newblock Experimental validation of feedback optimization in power
  distribution grids.
\newblock {\em Electric Power Systems Research}, 189:106782, Dec 2020.

\bibitem{Haberle2020}
V.~{Häberle}, A.~{Hauswirth}, L.~{Ortmann}, S.~{Bolognani}, and F.~{Dörfler}.
\newblock Non-convex feedback optimization with input and output constraints.
\newblock {\em IEEE Control Systems Letters}, 5(1):343--348, 2021.

\bibitem{Colombino2019}
M.~{Colombino}, J.~W. {Simpson-Porco}, and A.~{Bernstein}.
\newblock Towards robustness guarantees for feedback-based optimization.
\newblock In {\em 2019 IEEE 58th Conference on Decision and Control (CDC)},
  pages 6207--6214, 2019.

\bibitem{Chotteau2015}
V{\'e}ronique Chotteau.
\newblock {\em Perfusion Processes}, pages 407--443.
\newblock Springer International Publishing, 2015.

\bibitem{Singh2017}
Vineeta Singh, Shafiul Haque, Ram Niwas, Akansha Srivastava, Mukesh Pasupuleti,
  and C.~K.~M. Tripathi.
\newblock Strategies for fermentation medium optimization: An in-depth review.
\newblock {\em Frontiers in Microbiology}, 7:2087, 2017.

\bibitem{Schuster1994}
Stefan Schuster and Claus Hilgetag.
\newblock On elementary flux modes in biochemical reaction systems at steady
  state.
\newblock {\em Journal of Biological Systems}, 2(02):165--182, 1994.

\bibitem{Gagneur2004}
Julien Gagneur and Steffen Klamt.
\newblock Computation of elementary modes: a unifying framework and the new
  binary approach.
\newblock {\em BMC bioinformatics}, 5(1):1--21, 2004.

\bibitem{Bastin2008}
Georges Bastin.
\newblock Quantitative analysis of metabolic networks and design of minimal
  bioreaction models.
\newblock {\em Revue Africaine de la Recherche en Informatique et
  Math{\'e}matiques Appliqu{\'e}es}, 9, 2008.

\bibitem{METATOOL}
Axel~von Kamp and Stefan Schuster.
\newblock Metatool 5.0: fast and flexible elementary modes analysis.
\newblock {\em Bioinformatics}, 22(15):1930--1931, 2006.

\bibitem{arxivNumerical}
Numerical data for example in ``robustness of a feedback optimization scheme
  with application to bioprocess manufacturing''.
\newblock {\em arXiv}, 2021.

\bibitem{Lasalle1976}
Joseph~P La~Salle.
\newblock {\em The stability of dynamical systems}.
\newblock SIAM, 1976.

\bibitem{Bertsekas1999}
D.P. Bertsekas.
\newblock {\em Nonlinear Programming}.
\newblock Athena Scientific, 1999.

\end{thebibliography}
\onecolumn

\section{Numerical data for example in "Robustness of a feedback optimization scheme with application to bioprocess manufacturing"}
\subsection{Introduction}
In the following all the numerical details of the bioprocessing example for the above paper "Robustness of a feedback optimization scheme with application to bioprocess manufacturing", submitted for the Conference on Decision and Control 2021, are presented, for the purpose of reproducibility of the results. The metabolic network taken into exam is reported in Figure \ref{fig:second_met_net}.
\begin{figure}[h!]
    \centering
    \includegraphics[width=0.65\textwidth]{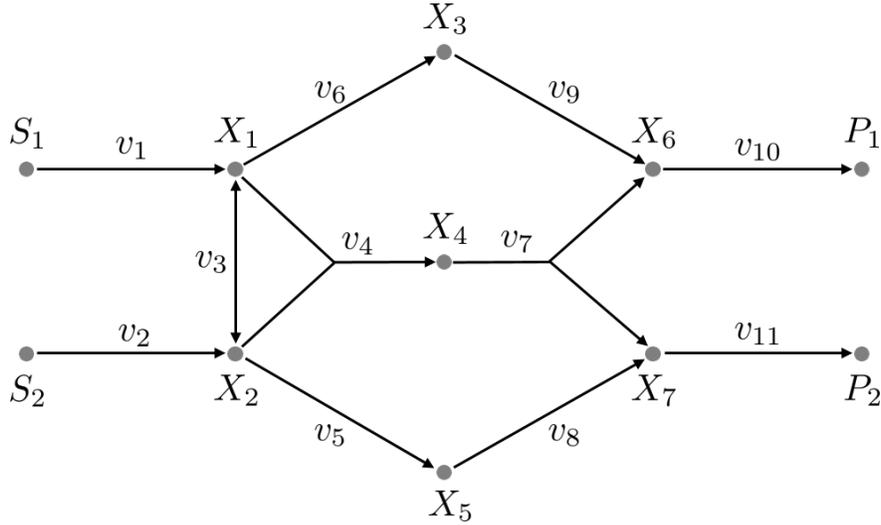}
    \caption{Example metabolic network.}
    \label{fig:second_met_net}
\end{figure}
\subsection{Numerical example data}
Mass-balance steady-state equation:
\begin{equation}
    \label{eq:mass_balance_eq}
    0 = N A_{ext} E \eta - F_1 c_{ext} + F_2 S_{in}
\end{equation}
Biomass at steady-state:
\begin{equation*}
    N = 2.15
\end{equation*}
External metabolites stoichiometric matrix:
\begin{equation}
    A_{ext} =  \begin{bmatrix}   -1&     0&     0&     0&     0&     0&     0&     0&     0&     0&     0\\
     0&    -1&     0&     0&     0&     0&     0&     0&     0&     0&     0\\
     0&     0&     0&     0&     0&     0&     0&     0&     0&     1&     0\\
     0&     0&     0&     0&     0&     0&     0&     0&     0&     0&     1 \end{bmatrix}
\end{equation}
Macro-reaction rates kinetic expressions:
\begin{align}
\label{eq:w_components}
    \eta_{1} = 1 \cdot {s_1 \over {s_1 + 1}} \cdot {1 \over {1 +  s_1}} \cdot {s_2 \over {s_2 + 2}} \cdot {1 \over {1 + s_2}}\\
    \eta_{2} = 1 \cdot {s_1 \over {s_1 + 1}} \cdot {1 \over {1 + s_1}} \cdot {s_2 \over {s_2 + 4}} \cdot {1 \over {1 +  s_2}} \nonumber\\
    \eta_{3} = 1 \cdot {s_1 \over {s_1 + 1}} \cdot {1 \over {1 + s_1}} \cdot {s_2 \over {s_2 + 0.5}} \cdot {1 \over {1 + s_2}}\nonumber\\
    \eta_{4} = 0.3 \cdot {s_1 \over {s_1 + 0.25}} \cdot {1 \over {1 + 0.25 s_1}} \cdot {s_2 \over {s_2 + 0.25}} \cdot {1 \over {1 + 0.25 s_2}}\nonumber
\end{align}
Elementary flux modes matrix:
\begin{equation}
    E = \begin{bmatrix} e_1 &e_2 &e_3 &e_4 \end{bmatrix} =  \begin{bmatrix} 0&     1&     2&     1\\
     2&     0&     0&     0\\
     0&     0&     0&     1\\
     1&     0&     1&     0\\
     0&     1&     0&     0\\
     0&     0&     0&     1\\
     1&     0&     1&     0\\
     0&    1&     0&     0\\
     1&     0&     1&     1\\
     1&     1&     1&     0\\
    -1&     1&     1&     0 \end{bmatrix}
\end{equation}
\begin{remark}
By computing the EFM matrix $E$ using METATOOL \cite{METATOOL} for the network in Figure \ref{fig:second_met_net} one will obtain $7$ columns instead of $4$. This is not an error, as we selected for this particular examples the columns $e_2$, $e_3$, $e_4$ and $e_5$ of the original matrix, by imposing $\eta_1 = \eta_6 = \eta_7 = 0$. $\eta_2$, $\eta_3$, $\eta_4$ and $\eta_5$ are respectively renamed  as $\eta_1$, $\eta_2$, $\eta_3$ and $\eta_4$, while $e_2$, $e_3$, $e_4$ and $e_5$ are respectively renamed as $e_1$, $e_2$, $e_3$ and $e_4$. $\hfill \blacktriangleleft$
\end{remark}
\noindent Input and output constraints:
\begin{align}
    0 \le S_{in}^1 \le 100 \\
    0 \le S_{in}^2 \le 100 \\
    0 \le S^1 \le 100 \\
    0 \le S^2 \le 100 \\ 
    0 \le P^1 \le 100 \\
    0 \le P^2 \le 100 
\end{align}
where $S_{in}^i$ is the concentration of the $i$-th substrate in the feed-flow, $S^i$ is the concentration of the $i$-th substrate at steady state in the bioreactor, while $P^i$ is the concentration of the $i$-th product at steady state in the bioreactor.\\ \\
Perfusion matrices:
\begin{align}
\label{eq:F_matrices}
    F_1 &= \begin{bmatrix} F &0 &0 &0 \\ 0 &F &0 &0\\ 0 &0 &F &0\\ 0 &0 &0 &F \end{bmatrix}\\
    F_2 &= \begin{bmatrix} F &0 \\ 0 &F\\ 0 &0\\ 0 &0  \end{bmatrix}\\
    F &= 0.5
\end{align}\\
\subsection{Simulations}
\subsubsection{Perturbed parameters knowledge}
For the simulations in Figure \ref{fig:pert_parameters} six realizations of random perturbations, in a $50\%$ range, on all the kinetic parameters in \eqref{eq:w_components} are considered. These realizations are obtained by uniformly sampling any parameter $\theta$  in the range $[0.5 \bar{\theta}, 1.5 \bar{\theta}]$, where $\bar{\theta}$ is the nominal value of the considered parameter. Each dashed line represents the output evolution when one of these realization, and the associated perturbed parameters are assumed to be the ones available for the model, while the real parameters are the nominal ones. The nominal parameters are the ones presented in \eqref{eq:w_components}.\\
\begin{figure}[h!]
    \centering
    \includegraphics[width=0.75\textwidth]{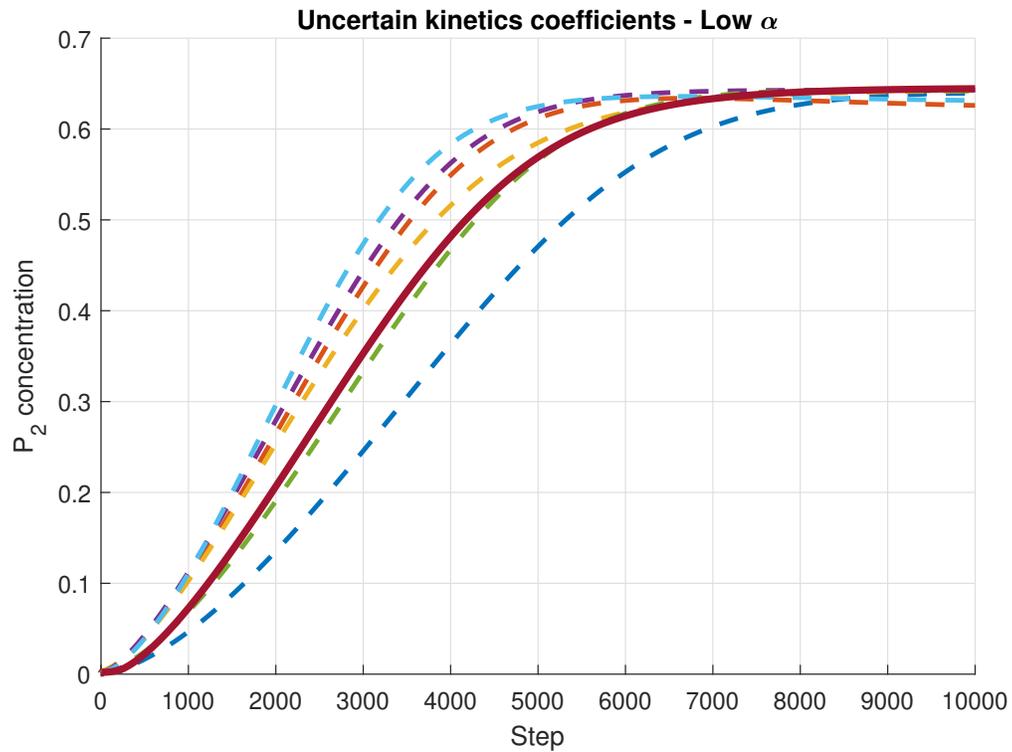}
    \caption{$P_2$ concentration evolution, with a low $\alpha$, for perturbed kinetics parameters. The solid line is the curve obtained in the nominal case, at the chosen $\alpha$, while the dashed lines are the curves obtained in six different realizations of the uncertainty.}
    \label{fig:pert_parameters}
\end{figure}
\subsubsection{Missing kinetics}
For the simulations in Figure \ref{fig:miss_kin}, the kinetics model available will be missing some components, and in order to implement this we removed one or two columns from the matrix $E$ and also removed the associated components of the vector $w$. In particular the following cases are considered:
\begin{itemize}
    \item Nominal case (solid blue line);
    \item $e_2$ and $\eta_2$ are missing (purple dashed);\\
    \item $e_3$ and $\eta_3$ are missing (yellow dashed);\\
    \item $e_4$ and $\eta_4$ are missing (red dashed);\\
    \item $e_2$, $e_3$ and $\eta_2$, $\eta_3$ are missing (light blue dashed);\\
    \item $e_3$, $e_4$ and $\eta_3$, $\eta_4$ are missing (green dashed);
\end{itemize}
\begin{figure}[h!]
    \centering
    \includegraphics[width=0.75\textwidth]{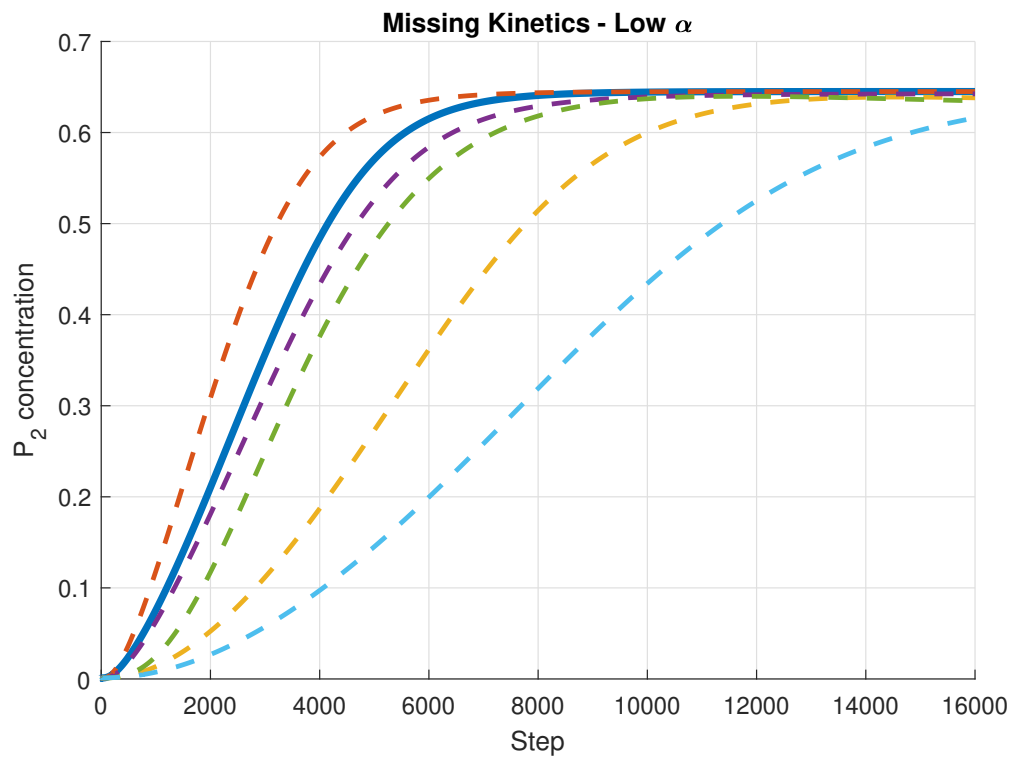}
    \caption{$P_2$ concentration evolution, with a low $\alpha$, when parts of the kinetics are not modelled. The solid line is the curve obtained in the nominal case, at the chosen $\alpha$, while the dashed lines are the curves obtained in the five cases where one or two columns of $E$ are removed.}
    \label{fig:miss_kin}
\end{figure}
\newpage    
\end{document}